\documentclass{amsart}
\usepackage{amsmath, amsthm, amscd, amsfonts, amssymb, graphicx, color}
\usepackage{enumerate}
\usepackage[bookmarksnumbered, colorlinks, plainpages]{hyperref}
\usepackage{tikz}
\usepackage{comment}

\newtheorem{theorem}{Theorem}[section]
\newtheorem{lemma}[theorem]{Lemma}
\newtheorem{proposition}[theorem]{Proposition}

\newtheorem{corollary}[theorem]{Corollary}
\newtheorem{definition}[theorem]{Definition}
\newtheorem{example}[theorem]{Example}
\newtheorem{remark}[theorem]{Remark}

\excludecomment{mysection}

\DeclareMathOperator{\co}{co}

\begin{document}
	
\title{Centre of a compact convex set}
\author{Anil Kumar Karn}
	
\address{School of Mathematical Sciences, National Institute of Science Education and Research Bhubaneswar, An OCC of Homi Bhabha National Institute, P.O. - Jatni, District - Khurda, Odisha - 752050, India.}

\email{\textcolor[rgb]{0.00,0.00,0.84}{anilkarn@niser.ac.in}}

\subjclass[2020]{Primary: 46B40; Secondary: 46B20.}
	
\keywords{Lead points of a convex set, centre of a convex set, Property (S), tracial absolute order unit space, absolutely central base normed space.}
	
\begin{abstract}
	We introduce the notion centre of a convex set and study the space of continuous affine functions on a compact convex set with a centre. We show that these spaces are precisely the dual of a base normed space in which the underlying base has a (unique) centre. We also characterize the corresponding base norm space. We obtain a condition on a compact, balanced, convex subset of a locally convex space so that the corresponding space of continuous affine functions on the convex set is an absolute order unit space. Similarly, we characterize a condition on the base with a centre of a base normed space so that the latter becomes an absolutely base normed space.
\end{abstract}

\maketitle 

\section{Introduction} 

Let $K$ be a compact, convex subset of a locally convex space $X$ and let $A_{\mathbb{R}}(K)$ denote the space of real valued, continuous and affine functions defined on $K$. Then $A_{\mathbb{R}}(K)$ is a complete order unit space and the state space of $A_{\mathbb{R}}(K)$ is affine homeomorphic to $K$. Conversely, if $(V, e)$ is a complete order unit space with its state space $S(V)$, then $V$ is unitally order isomorphic to $A_{\mathbb{R}}(S(V))$. The self-adjoint part of an operator system (that is, a self-adjoint subspace containing identity) in a unital C$^*$-algebra $A$ is an order unit space. Therefore, the above said discussion describes Kadison's functional representation theorem.

Further, if $K$ and $L$ are compact convex sets in suitable locally convex spaces, then $K$ is affine homeomorphic to $L$ if and only if $A_{\mathbb{R}}(K)$ is unitally order isomorphic to $A_{\mathbb{R}}(L)$. In other words, there exists a bijective correspondence between the class of compact convex sets of locally convex spaces and the class of complete order unit spaces. 

In this paper, we discuss an intrinsic characterization of compact convex sets which are affine homeomorphic to balanced, convex, compact sets. We also discuss the corresponding subclass of complete order unit spaces. We introduce the notion of a centre of a convex set and prove that a compact convex set $K$ in a locally convex space $X$ is affine homeomorphic to a balanced, convex, compact set $L$ in another locally convex space $Y$ if and only if $K$ has a (unique) centre. 

We study the properties of the space $A(B)$ of continuous affine functions on a compact convex set $B$ with a centre $b_0$ in a locally convex space $X$. We prove that $A(B)$ is unitally order isomorphic to $(V^{(\cdot)}, e)$ for some Banach space $V$. Here $(V^{(\cdot)}, e)$ is the order unit space obtained by adjoining an order unit to $V$ (Theorem \ref{day}). Next, we prove that the base normed space described in Proposition \ref{adjbase} is precisely a base normed space in which the corresponding base has a (unique) centre. We also discuss the order unit spaces whose state space have a centre. 

In \cite{K21}, the author discussed an order theoretic generalization of spin factors obtained by adjoining an order unit to a normed linear space. Let $V$ be a normed linear space and let $(V^{(\cdot)}, e)$ is the order unit space obtained by adjoining an order unit to $V$. He proved that there is a canonical absolute value defined in $V^{(\cdot)}$ and that $(V^{(\cdot)}, e)$ becomes an absolute order unit space if and only if $V$ is strictly convex \cite[Theorem 2.14]{K21}. In the present paper, we discuss an absolute value in a base normed space in which the base has a centre and find a condition under which the space becomes an absolutely base normed space (Theorem \ref{bstrict}).

A summary of the paper is as follows.

In Section 2, we recall the adjoining of an order unit to a normed linear space and the notion of lead points of a convex set and introduce the notion of a centre of a convex set. We study some related properties. 

In Section 3, we consider the space of continuous affine functions on a compact convex set with a centre and describe its order and norm structures. We prove that this space can be characterized as one obtained by adjoining an order unit to a Banach space. 

In Section 4,we study the norm and order structure of a base norm space  in which the base has a (unique) centre. We produce an example to show that the base of every base normed space need not have a centre.

In Section 5, we consider the order unit spaces having a central state space. We obtain a characterization for a state to be a centre of the state space. We study the norm and order structure an order unit space having a central state and prove that such a space is precisely obtained by adjoining an order unit to a normed linear space. Let us recall that such a space becomes an absolute order unit space if the underlying normed space is strictly convex \cite[Theorem 3.14]{K21}. We introduce the Property (S) on a compact, balanced, convex subset of a real locally convex space and prove that an order unit space having a central state is an absolute order unit space if and only if the corresponding state space satisfies Property (S).
 
In Section 6, we describe an absolute value on a base normed space in which the base has a centre. This absolute value arises naturally. We characterize the condition on the underlying base under which the space becomes an absolutely ordered base normed space.

\section{Centre of a convex set} 

\subsection{Adjoining an order unit to a normed linear space.}
The following construction has been adopted from \cite[1.6.1]{Jam70} and is apparently due to M. M. Day. 
\begin{theorem}[M. M. Day]\label{day}
	Let $(V, \Vert \cdot \Vert)$ be a real normed linear space. Consider $V^{(\cdot)} := V \times \mathbb{R}$ and define 
	$$V^{(\cdot)+} := \lbrace (v, \alpha): \Vert v \Vert \le \alpha \rbrace.$$ 
	Then $(V^{(\cdot)}, V^{(\cdot)+})$ becomes a real ordered space such that $V^{(\cdot)+}$ is proper, generating and Archimedean. Also, $e = (0, 1) \in V^{(\cdot)+}$ is an order unit for $V^{(\cdot)}$ so that $(V^{(\cdot)}, e)$ becomes an order unit space.  The corresponding order unit norm is given by 
	$$\Vert (v, \alpha) \Vert_e = \Vert v \Vert + \vert \alpha \vert$$
	for all $(v, \alpha) \in V^{(\cdot)}$. In particular, $(V^{(\cdot)}, e)$ is isometrically isomorphic to $V \oplus_1 \mathbb{R}$. Thus $V$, identified with $\lbrace (v, 0): v \in V \rbrace$, can be identified as a closed subspace of $V^{(\cdot)}$. Further, $V^{(\cdot)}$ is complete if and only if so is $V$.
\end{theorem}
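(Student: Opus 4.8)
The plan is to verify the listed assertions one at a time, directly from the definition of $V^{(\cdot)+}$, essentially in the order stated. First I would confirm that $V^{(\cdot)+}$ is a convex cone: if $\Vert v_i \Vert \le \alpha_i$ for $i = 1, 2$, then $\Vert v_1 + v_2 \Vert \le \Vert v_1 \Vert + \Vert v_2 \Vert \le \alpha_1 + \alpha_2$ by the triangle inequality, and $\Vert t v \Vert = t \Vert v \Vert \le t \alpha$ for $t \ge 0$ by homogeneity. For properness I would take $(v, \alpha) \in V^{(\cdot)+} \cap (-V^{(\cdot)+})$; then $\Vert v \Vert \le \alpha$ together with $\Vert v \Vert \le -\alpha$ forces $\alpha = 0$ and $v = 0$. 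For the order unit claim I would observe that $\lambda e + (v, \alpha) = (v, \lambda + \alpha)$ and $\lambda e - (v, \alpha) = (-v, \lambda - \alpha)$ lie in $V^{(\cdot)+}$ exactly when $\lambda \ge \Vert v \Vert - \alpha$ and $\lambda \ge \Vert v \Vert + \alpha$ respectively, so any $\lambda \ge \Vert v \Vert + \vert \alpha \vert$ gives $-\lambda e \le (v, \alpha) \le \lambda e$. This shows that $e = (0,1)$ is an order unit and, in particular, that $V^{(\cdot)+}$ is generating.

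The one step requiring genuine care is the Archimedean property, and I expect it to be the main obstacle. Here I would assume $n(v, \alpha) \le (w, \beta)$ for every $n \in \mathbb{N}$, which unwinds to the family of inequalities $\Vert w - n v \Vert \le \beta - n \alpha$. Since the left-hand side is nonnegative, letting $n$ grow forces $\alpha \le 0$. Then the reverse triangle inequality $\Vert w - n v \Vert \ge n \Vert v \Vert - \Vert w \Vert$ yields $n(\Vert v \Vert + \alpha) \le \beta + \Vert w \Vert$ for all $n$, so $\Vert v \Vert + \alpha \le 0$; that is, $\Vert v \Vert \le -\alpha$, which is precisely the statement $(v, \alpha) \le 0$. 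The subtlety is that this is the only place where the asymptotics in $n$ must be played against the norm, and both the sign of $\alpha$ and the reverse triangle inequality are genuinely needed.

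Finally, the norm formula and the remaining structural claims follow readily. From the order unit estimate above, $\Vert (v, \alpha) \Vert_e = \inf \lbrace \lambda > 0 : \lambda \ge \Vert v \Vert + \vert \alpha \vert \rbrace = \Vert v \Vert + \vert \alpha \vert$, which is exactly the norm of $V \oplus_1 \mathbb{R}$, so the identity map is the asserted isometric isomorphism. Restricting to $\lbrace (v, 0) : v \in V \rbrace$ gives $\Vert (v, 0) \Vert_e = \Vert v \Vert$, so $V$ embeds isometrically; this copy is closed since it is the kernel of the continuous coordinate functional $(v, \alpha) \mapsto \alpha$. The completeness equivalence then follows because $V^{(\cdot)} = V \oplus_1 \mathbb{R}$ with $\mathbb{R}$ complete, so $V^{(\cdot)}$ is complete whenever $V$ is, while conversely $V$ is complete whenever $V^{(\cdot)}$ is, being a closed subspace of it.
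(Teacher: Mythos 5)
Your verification is correct and complete; the paper itself offers no proof of this theorem, simply citing it (via Jameson's \emph{Ordered linear spaces}, 1.6.1) as due to M. M. Day, so there is nothing to compare against, but your argument is the standard one and every step checks out. In particular the one delicate point, the Archimedean property, is handled properly: from $\Vert w - nv \Vert \le \beta - n\alpha$ the reverse triangle inequality gives $n(\Vert v \Vert + \alpha) \le \beta + \Vert w \Vert$ for all $n$, hence $\Vert v \Vert \le -\alpha$, i.e.\ $(v,\alpha) \le 0$ (your preliminary deduction that $\alpha \le 0$ is in fact subsumed by this and could be omitted).
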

The following result can proved in a routine way. 
\begin{proposition}
	Let $V$ be a real normed linear space and consider the corresponding order unit space $(V^{(\cdot)}, e)$. Then the dual of $(V^{(\cdot)}, e)$ is a base normed space isometrically isomorphic to $(V^{*(\cdot)}, S(V^{(\cdot)}))$ where 
	$$S(V^{(\cdot)}) := \lbrace (f, 1): f \in V^* ~ \textrm{and} ~ \Vert f \Vert \le 1 \rbrace$$ 
	is the state space of $(V^{(\cdot)}, e)$ as well as the base of $V^{*(\cdot)+}$ which determine the dual norm as the base norm on $V^{*(\cdot)}$. 
\end{proposition}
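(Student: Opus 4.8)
The plan is to identify the dual of the order unit space $(V^{(\cdot)}, e)$ explicitly and then recognize its induced order and norm as those of a base normed space of the stated form. Recall from Theorem \ref{day} that $(V^{(\cdot)}, e)$ is isometrically isomorphic to $V \oplus_1 \mathbb{R}$; consequently, as a Banach space, its dual is isometrically $V^* \oplus_\infty \mathbb{R}$, via the pairing $\langle (f, \beta), (v, \alpha) \rangle = f(v) + \beta \alpha$. I would take this as the starting algebraic identification and write $V^{*(\cdot)} := V^* \times \mathbb{R}$, so that a generic dual functional is a pair $(f, \beta)$ with $f \in V^*$ and $\beta \in \mathbb{R}$.

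Next I would compute the dual cone. A functional $(f, \beta)$ is positive precisely when $f(v) + \beta \alpha \ge 0$ for every $(v, \alpha)$ with $\Vert v \Vert \le \alpha$. First, testing $(0, \alpha)$ for $\alpha \ge 0$ forces $\beta \ge 0$. Then for any $v$ with $\Vert v \Vert \le \alpha$ we need $\beta \alpha \ge -f(v)$; optimizing over such $v$ shows that the worst case gives $\beta \alpha \ge \Vert f \Vert \alpha$, so positivity is equivalent to $\beta \ge \Vert f \Vert$. Thus $V^{*(\cdot)+} = \{ (f, \beta) : \Vert f \Vert \le \beta \}$, which is exactly the cone produced by the Day construction applied to $V^*$. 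This already exhibits $(V^{*(\cdot)}, V^{*(\cdot)+})$ with the same formal structure, and I would note that the canonical order unit evaluation identifies the distinguished positive functionals.

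The core of the argument is then to read off the base and the base norm. The state space $S(V^{(\cdot)})$ consists of the positive functionals $(f, \beta)$ with $\langle (f, \beta), e \rangle = 1$; since $e = (0, 1)$, this evaluation equals $\beta$, so $\beta = 1$, and combined with positivity $\Vert f \Vert \le \beta = 1$ we obtain exactly $S(V^{(\cdot)}) = \{ (f, 1) : \Vert f \Vert \le 1 \}$ as claimed. This set is a base for the cone $V^{*(\cdot)+}$: every nonzero positive $(f, \beta)$ has $\beta > 0$ and decomposes uniquely as $\beta \cdot (f/\beta, 1)$ with $(f/\beta, 1) \in S(V^{(\cdot)})$. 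Finally I would verify that the base norm determined by this base coincides with the dual norm on $V^{*(\cdot)} = V^* \oplus_\infty \mathbb{R}$; concretely, the base norm of $(f, \beta)$ is $\inf\{\lambda + \mu\}$ over decompositions $(f, \beta) = \lambda s_1 - \mu s_2$ with $s_i \in S(V^{(\cdot)})$ and $\lambda, \mu \ge 0$, and a short computation shows this equals $\max(\Vert f \Vert, |\beta|)$, matching the $\ell^\infty$-dual norm.

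The routine verifications are the dual-cone and dual-norm identities, which follow from standard duality between $\ell^1$- and $\ell^\infty$-direct sums. The one step deserving genuine care, and the likely main obstacle, is confirming that the base norm induced by $S(V^{(\cdot)})$ on all of $V^{*(\cdot)}$ (not merely on the cone) agrees with the dual norm; this requires checking that the minimal-decomposition infimum is attained and yields the $\max$ formula rather than only dominating or being dominated by it. Establishing this two-sided estimate—using that $\Vert f \Vert \le 1$ on the base and balancing the $V^*$ and $\mathbb{R}$ components optimally—is where the ``routine'' label must be substantiated, and it is the crux that makes the isometric identification in the proposition precise.
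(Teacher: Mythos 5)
Your proposal is correct; the paper itself omits the proof of this proposition (declaring it routine), and your argument — dualizing $V\oplus_1\mathbb{R}$ to $V^*\oplus_\infty\mathbb{R}$, computing the dual cone as $\{(f,\beta):\Vert f\Vert\le\beta\}$, identifying the state space as the base, and verifying the two-sided base-norm estimate via the decomposition $2\lambda=\Vert f\Vert+\beta$, $2\mu=\Vert f\Vert-\beta$ — is exactly the computation the paper carries out for the analogous Proposition \ref{adjbase}. Nothing is missing; the "crux" you flag is settled by precisely that explicit decomposition, which attains the infimum and yields $\max(\Vert f\Vert,\vert\beta\vert)$.
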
 
We show that a similar construction is also available on $V^{(\cdot)}$ itself albeit with a different norm (and the same order structure). 
\begin{proposition}\label{adjbase}
	Let $V$ be a real normed linear space and consider the corresponding ordered vector space $(V^{(\cdot)}, V^{(\cdot)+})$. Put 
	$$B^{(\cdot)} = \lbrace (v, 1): \Vert v \Vert \le 1 \rbrace.$$
	Then $B^{(\cdot)}$ is a base for $V^{(\cdot)+}$ such that $(V^{(\cdot)}, B^{(\cdot)+})$ is a base normed space. The corresponding base norm is given by 
	$$\Vert (v, \alpha) \Vert_B = \max \lbrace \Vert v \Vert, \vert \alpha \vert \rbrace$$ 
	for all $(v, \alpha) \in V^{(\cdot)}$. In particular, $(V^{(\cdot)}, B^{(\cdot)})$ is isometrically isomorphic to $V \oplus_{\infty} \mathbb{R}$.  Thus $V$, identified with $\lbrace (v, 0): v \in V \rbrace$, can be identified as a closed subspace of $V^{(\cdot)}$. Further, $V^{(\cdot)}$ is complete if and only if so is $V$.
\end{proposition}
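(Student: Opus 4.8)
The plan is to check, in turn, the three ingredients that constitute a base normed space: that $B^{(\cdot)}$ is genuinely a base for $V^{(\cdot)+}$, that the cone is generating, and that the gauge of $\co(B^{(\cdot)} \cup -B^{(\cdot)})$ is the asserted norm. First I would exhibit the strictly positive functional that cuts out the base, namely the coordinate functional $f(v,\alpha) = \alpha$. It is strictly positive on $V^{(\cdot)+} \setminus \{0\}$, since $(v,\alpha) \in V^{(\cdot)+}$ forces $\Vert v \Vert \le \alpha$ and hence $\alpha > 0$ whenever $(v,\alpha) \neq 0$. One then verifies $B^{(\cdot)} = \lbrace x \in V^{(\cdot)+}: f(x) = 1 \rbrace$ and that every nonzero $(w,\beta) \in V^{(\cdot)+}$ admits the unique representation $(w,\beta) = \beta\,(w/\beta, 1)$ with $\beta = f(w,\beta) > 0$ and $(w/\beta,1) \in B^{(\cdot)}$; this is precisely the base property. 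That $V^{(\cdot)+}$ is generating is already recorded in Theorem \ref{day}.

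The heart of the computation is to identify $\co(B^{(\cdot)} \cup -B^{(\cdot)})$ explicitly, which I expect to be the only step requiring genuine care. Writing a typical convex combination $t(v_1,1) + (1-t)(v_2,-1) = (tv_1 + (1-t)v_2,\, 2t-1)$ with $\Vert v_1 \Vert, \Vert v_2 \Vert \le 1$ and $t \in [0,1]$, the triangle inequality gives $\Vert tv_1 + (1-t)v_2 \Vert \le 1$ while clearly $\vert 2t-1 \vert \le 1$, so $\co(B^{(\cdot)} \cup -B^{(\cdot)}) \subseteq \lbrace (v,\alpha): \Vert v \Vert \le 1,\ \vert \alpha \vert \le 1 \rbrace$. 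For the reverse inclusion, given $(v,\alpha)$ with $\Vert v \Vert \le 1$ and $\vert \alpha \vert \le 1$, I put $t = (1+\alpha)/2 \in [0,1]$ and take $v_1 = v_2 = v$, so that $(v,\alpha) = t(v,1) + (1-t)(v,-1)$ lies in the convex hull. Hence $\co(B^{(\cdot)} \cup -B^{(\cdot)})$ equals the closed unit ball of $V \oplus_{\infty} \mathbb{R}$; in particular it is already norm-closed, so no passage to the closure is needed. Its Minkowski gauge is $\inf\lbrace \lambda > 0: \Vert v \Vert \le \lambda,\ \vert \alpha \vert \le \lambda \rbrace = \max\lbrace \Vert v \Vert, \vert \alpha \vert \rbrace$, which is a genuine norm, and this establishes that $(V^{(\cdot)}, B^{(\cdot)})$ is a base normed space with base norm $\Vert (v,\alpha) \Vert_B = \max\lbrace \Vert v \Vert, \vert \alpha \vert \rbrace$, isometrically isomorphic to $V \oplus_{\infty} \mathbb{R}$. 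Equivalently, one may run the dual computation $\Vert (v,\alpha) \Vert_B = \inf\lbrace \lambda + \mu : (v,\alpha) = \lambda b_1 - \mu b_2,\ b_i \in B^{(\cdot)},\ \lambda,\mu \ge 0 \rbrace$ and use the Minkowski-sum identity $\lambda B_V + \mu B_V = (\lambda+\mu)B_V$, where $B_V$ is the closed unit ball of $V$, to see that the infimum is attained precisely at $\max\lbrace \Vert v \Vert, \vert \alpha \vert \rbrace$.

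The remaining assertions are then immediate. The map $v \mapsto (v,0)$ satisfies $\Vert (v,0) \Vert_B = \max\lbrace \Vert v \Vert, 0 \rbrace = \Vert v \Vert$, so it is an isometric embedding of $V$ onto $\lbrace (v,0): v \in V \rbrace$, and this image is closed because it is the kernel of the continuous functional $(v,\alpha) \mapsto \alpha$. Finally, since $V^{(\cdot)}$ is isometrically $V \oplus_{\infty} \mathbb{R}$ and $\mathbb{R}$ is complete, the product is complete if and only if $V$ is, which gives the last claim. I anticipate that isolating $\co(B^{(\cdot)} \cup -B^{(\cdot)})$ itself, rather than its closure, is the one place where the argument must be carried out carefully; everything else is bookkeeping against the definitions.
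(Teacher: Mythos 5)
Your proof is correct and follows essentially the same route as the paper's: verify the unique base representation (via the strictly positive functional $(v,\alpha)\mapsto\alpha$) and then identify $\co\left(B^{(\cdot)}\cup -B^{(\cdot)}\right)$ with the closed unit ball of $V\oplus_{\infty}\mathbb{R}$. Your reverse inclusion via $t=(1+\alpha)/2$ with $v_1=v_2=v$ is a clean one-case simplification of the paper's two-case decomposition (splitting on whether $\Vert x\Vert\le\vert\alpha\vert$), but the overall argument is the same.
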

\begin{proof}
	It suffices to prove that $B^{(\cdot)}$ is a base for $V^{(\cdot)+}$ such that 
	$$\co \left(B^{(\cdot)} \cup - B^{(\cdot)}\right) = \lbrace (x, \alpha): \Vert x \Vert \le 1, \vert \alpha \vert \le 1 \rbrace.$$ 
	Note that $B^{(\cdot)}$ is convex. If $(v, \alpha) \in V^{(\cdot)+}$, then $\Vert v \Vert \le \alpha$. So if $(v, \alpha) \ne (0, 0)$, then $\alpha > 0$. Now letting $x_0 = \alpha^{-1} x$, we get that $(x_0, 1) \in B$ and $(x, \alpha) = \alpha (x_0, 1)$ is a unique representation. We show that 
	$$\co \left(B^{(\cdot)} \cup - B^{(\cdot)}\right) = \lbrace (x, \alpha): \Vert x \Vert \le 1, \vert \alpha \vert \le 1 \rbrace.$$ 
	Let $(x, \alpha) \in \co \left(B^{(\cdot)} \cup - B^{(\cdot)}\right)$, say $(x, \alpha) = \lambda (x_1, 1) - (1 - \lambda) (x_2, 1)$ for some $x_1, x_2 \in V$ with $\Vert x_1 \Vert \le 1$ and $\Vert x_1 \Vert \le 1$ and $\lambda \in [0, 1]$. Then $x = \lambda x_1 - (1 - \lambda) x_2$ and $\alpha = \lambda - (1 - \lambda) = 2 \lambda - 1$. Thus $\Vert x \Vert \le 1$ and $\vert \alpha \vert \le 1$. Conversely, we assume that $x \in V$ and $\alpha \in \mathbb{R}$ be such that $\Vert x \Vert \le 1$ and $\vert \alpha \vert \le 1$. If $\Vert x \Vert \le \vert \alpha \vert$, then $(\alpha^{-1}x, 1) \in B^{(\cdot)}$ so that $(x, \alpha) = \alpha (\alpha^{-1} x, 1) \in \co \left(B^{(\cdot)} \cup - B^{(\cdot)}\right)$. So we assume that $\vert \alpha \vert < \Vert x \Vert$. Put $y = \Vert x \Vert^{-1} x$. Then $(y, 1), (- y, 1) \in B^{(\cdot)}$. Let $2 \lambda = \Vert x \Vert + \alpha$ and $2 \mu = \Vert x \Vert - \alpha$. Then $\lambda, \mu \in [0, 1]$ with $\lambda + \mu = \Vert x \Vert$ and $\lambda - \mu = \alpha$. Thus 
	$$\lambda (y, 1) - \mu (- y, 1) = ((\lambda + \mu) y, \lambda - \mu) = (x, \alpha)$$ 
	so that $(x, \alpha) \in \co \left(B^{(\cdot)} \cup - B^{(\cdot)}\right)$. 
\end{proof}

In this section, we shall obtain a geometric description of $B^{(\cdot)}$. First, we recall the following notion introduced in \cite{GK20}. 

\subsection{Lead points of a convex set}
\begin{definition} \cite{GK20}
	Let $C$ be a convex subset of a real vector space $X$ with $0 \in C$. An element $x \in C$ is called a \emph{lead point} of $C$, if for any $y \in C$ and $\lambda \in [0, 1]$ with $x = \lambda y$, we have $\lambda = 1$ and $y = x$. The set of all lead points of $C$ is denoted by $Lead(C)$. 
\end{definition} 

The following result was essentially proved in \cite[Proposition 3.2]{GK20}.
\begin{lemma}\label{l1}
	Let $C$ be a non-empty, radially compact, convex subset of a real vector space $X$ containing $0$. Then for each $x \in C$ with $x \ne 0$, there exist a unique $x_1 \in Lead(C)$ and a unique $\lambda \in (0, 1]$ such that $x = \lambda x_1$. (A non-empty subset $S$ of $X$ is called \emph{radially compact}, if for any $x \in X$ with $x \ne 0$, the set $\lbrace \alpha \in \mathbb{R}: \alpha x \in S \rbrace$ is compact in $\mathbb{R}$.)
\end{lemma}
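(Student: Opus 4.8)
The plan is to reduce the whole statement to a one-dimensional problem along the ray spanned by $x$. Fix $x \in C$ with $x \ne 0$ and set
$$S := \lbrace \alpha \in \mathbb{R} : \alpha x \in C \rbrace.$$
First I would record the elementary structural facts about $S$: since $0 \in C$ and $x \in C$ we have $0, 1 \in S$; since $C$ is convex, $S$ is convex, hence an interval; and since $C$ is radially compact and $x \ne 0$, the set $S$ is compact. Consequently $S = [a, b]$ for some $a \le 0$ and $b \ge 1$. Let $\beta := \max S = b \ge 1$, so that $\beta x \in C$.

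Next I would produce the required decomposition by taking the ``farthest'' admissible point on the ray. Put $x_1 := \beta x$ and $\lambda := \beta^{-1} \in (0, 1]$; then visibly $x = \lambda x_1$. The crux is to verify $x_1 \in Lead(C)$. Suppose $x_1 = \mu y$ with $y \in C$ and $\mu \in [0, 1]$. Since $x_1 \ne 0$ (as $\beta \ge 1 > 0$), necessarily $\mu \ne 0$, so $y = \mu^{-1}\beta\, x \in C$ forces $\mu^{-1}\beta \in S$ and hence $\mu^{-1}\beta \le \beta$ by maximality of $\beta$. But $\mu \le 1$ gives $\mu^{-1}\beta \ge \beta$, so $\mu^{-1}\beta = \beta$, whence $\mu = 1$ and $y = x_1$. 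This is exactly the lead-point condition.

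Finally I would settle uniqueness directly from the definition of a lead point. Suppose $x = \lambda_1 x_1 = \lambda_2 x_2$ with $x_1, x_2 \in Lead(C)$ and $\lambda_1, \lambda_2 \in (0, 1]$; assume without loss of generality that $\lambda_1 \le \lambda_2$. Since $x \ne 0$, both $x_i$ are nonzero positive multiples of $x$, and $x_2 = (\lambda_1 \lambda_2^{-1}) x_1$ with $x_1 \in C$ and $\lambda_1 \lambda_2^{-1} \in (0, 1]$. Applying the lead-point property of $x_2$ gives $\lambda_1 \lambda_2^{-1} = 1$ and $x_1 = x_2$, i.e. $\lambda_1 = \lambda_2$ and $x_1 = x_2$.

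I expect the main obstacle to be the middle step: correctly extracting the lead point as the endpoint $\beta x$ of the compact segment $Sx$ and checking that maximality of $\beta$ translates precisely into the (a priori stronger-looking) lead-point condition. Once $S$ is identified as a compact interval, existence and uniqueness both follow from one-dimensional bookkeeping, so radial compactness (guaranteeing that $\max S$ exists) and convexity (guaranteeing that $S$ is an interval) are the two hypotheses doing the real work.
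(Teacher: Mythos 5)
Your proof is correct. The paper itself defers this lemma to \cite[Proposition 3.2]{GK20} rather than proving it, but your argument --- identifying the radial section $S$ as a compact interval and taking $x_1 = (\max S)\,x$, with maximality yielding the lead-point condition and the definition of lead point yielding uniqueness --- is exactly the technique the paper uses in its proof of Theorem \ref{nls}(1), so it matches the intended approach.
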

We extend this result to a simple but interesting characterization of a norm on a real vector space.
\begin{theorem}\label{nls}
	Let $C_0$ be a non-empty, radially compact, absolutely convex subset of a real vector space $X$. Let $X_0$ be the linear span of $C_0$ so that $X_0 = \cup_{n=1}^{\infty} n C_0$. 
	\begin{enumerate}
		\item Then for each non-zero $x \in X_0$, there exists a unique $c \in Lead(C_0)$ and a unique $\alpha > 0$ such that $x = \alpha c$. 
		\item Let us write $\alpha := r(x)$ and put $r(0) = 0$. Then $r: X_0 \to \mathbb{R}^+$ determines a norm on $X_0$ such that $C_0$ is the closed unit ball.
	\end{enumerate}
\end{theorem}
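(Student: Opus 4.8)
The plan is to bootstrap the entire statement from Lemma \ref{l1}, whose conclusion is available only for points of $C_0$ itself and must first be propagated to all of $X_0$. For part (1), I would begin with existence: given a nonzero $x \in X_0$, the identity $X_0 = \cup_{n=1}^{\infty} n C_0$ supplies an integer $n \ge 1$ with $n^{-1} x \in C_0$ and $n^{-1} x \ne 0$, so Lemma \ref{l1} yields a unique $c \in Lead(C_0)$ and $\lambda \in (0,1]$ with $n^{-1} x = \lambda c$, whence $x = (n\lambda) c$ with $n\lambda > 0$. For uniqueness, I would suppose $x = \alpha c = \beta d$ with $c, d \in Lead(C_0)$ and $\alpha, \beta > 0$, assume without loss of generality $\alpha \le \beta$, and rewrite this as $d = (\alpha/\beta) c$ with $\alpha/\beta \in (0,1]$ and $c \in C_0$; since $d$ is a lead point, its defining property forces $\alpha/\beta = 1$ and $c = d$, so $\alpha = \beta$ and $c = d$. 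This makes $r$ well defined, with $r(x) > 0$ for $x \ne 0$ by construction.

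For part (2), the one structural fact I need before computing is that $Lead(C_0)$ is symmetric. Because $C_0$ is absolutely convex it is balanced, so $-C_0 = C_0$, and a direct check shows $c \in Lead(C_0)$ implies $-c \in Lead(C_0)$: if $-c = \lambda y$ with $y \in C_0$ and $\lambda \in [0,1]$, then $c = \lambda(-y)$ with $-y \in C_0$, forcing $\lambda = 1$ and $y = -c$. Positive homogeneity $r(\beta x) = \beta\, r(x)$ for $\beta > 0$ is immediate from $x = r(x)\, c$, and for $\beta < 0$ I would use $\beta x = |\beta|\, r(x)\, (-c)$ together with $-c \in Lead(C_0)$ to conclude $r(\beta x) = |\beta|\, r(x)$; the case $\beta = 0$ is trivial. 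Thus $r$ is absolutely homogeneous and positive definite.

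To finish I would identify the sublevel set, proving $C_0 = \lbrace x \in X_0 : r(x) \le 1 \rbrace$ (the case $x = 0$ being clear since $0 \in C_0$): if $r(x) \le 1$ then $x = r(x)\, c$ with $c \in C_0$ and $r(x) \in [0,1]$, so convexity and balancedness give $x \in C_0$, while if $x \in C_0$ with $x \ne 0$ then Lemma \ref{l1} writes $x = \lambda c$ with $\lambda \in (0,1]$, so $r(x) = \lambda \le 1$. The triangle inequality then becomes purely geometric: for nonzero $x, y$ set $s = r(x)$ and $t = r(y)$, observe $x/s, y/t \in C_0$, and note that the convex combination $\frac{x+y}{s+t} = \frac{s}{s+t}\cdot\frac{x}{s} + \frac{t}{s+t}\cdot\frac{y}{t}$ lies in $C_0$, so $r\big(\tfrac{x+y}{s+t}\big) \le 1$ and homogeneity yields $r(x+y) \le s + t$. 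Hence $r$ is a norm whose closed unit ball is exactly $C_0$. The only step carrying real content is this subadditivity, and the unit-ball characterization dissolves it; the genuine care lies in tracking hypotheses — radial compactness is consumed entirely inside Lemma \ref{l1} (it is what guarantees the lead-point decomposition exists with a finite positive scalar), whereas balancedness is precisely what upgrades positive homogeneity to full homogeneity through the symmetry of $Lead(C_0)$.
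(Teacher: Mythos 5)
Your proof is correct and follows essentially the same route as the paper: both arguments bootstrap everything from Lemma \ref{l1}, establish the symmetry of $Lead(C_0)$ via balancedness to get full homogeneity, and obtain subadditivity from the same convex-combination identity $\frac{x+y}{s+t} = \frac{s}{s+t}\cdot\frac{x}{s} + \frac{t}{s+t}\cdot\frac{y}{t}$. The only cosmetic difference is in the existence step of (1), where you scale $x$ into $C_0$ by an integer and apply Lemma \ref{l1} directly, while the paper first locates the lead point as the endpoint $\sup\lbrace \lambda : \lambda x \in C_0\rbrace$ of the radial section; both are valid and your unit-ball identification $C_0 = \lbrace x : r(x) \le 1\rbrace$ is if anything spelled out more completely than in the paper.
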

\begin{proof}
	(1): Fix $x \in X_0$, $x \ne 0$. As $C_0$ absorbing in $X_0$, there exists a non-zero $k \in \mathbb{R}$ such that $k x \in C_0$. Since $C_0$ is absolutely convex, we may take $k > 0$. As $C_0$ is radially compact, the set $S = \lbrace \lambda \in \mathbb{R}: \lambda x \in C_0 \rbrace$ is a compact set in $\mathbb{R}$. Also, $\sup S = \alpha_0 > 0$ with $\alpha_0 x \in C_0$. Put $\alpha_0 x = c$. We show that $c \in Lead(C_0)$. By Lemma \ref{l1}, we have $c = \lambda c_1$ for some $c_1 \in Lead(C_0)$ and $\lambda \in (0, 1]$ so that $\alpha \lambda^{-1} \in S$. As $\alpha_0 = \sup S$, we get that $\lambda = 1$ and consequently, $c \in Lead(C_0)$. Next, let $x = \beta d$ for some $d \in Lead(C_0)$ and $\beta > 0$. Then $\beta^{-1} \le \alpha_0$. Also, we have $c = \alpha_0 \beta d$. Thus by the definition of $Lead(C_0)$, we get $c = d$ and $\beta = \alpha_0^{-1}$. 
	
	(2) We note that $r(x) \le 1$ for all $x \in C_0$ and that $r(x) = 1$ if and only if $x \in Lead(C_0)$. Also, for $x \in X_0$, $x \ne 0$ we have $r(x) > 0$. Let $x \in X_0$ and $\alpha \in \mathbb{R}$. Without any loss of generality, we assume that $x \ne 0$ and $\alpha \ne 0$. Let $x = \lambda c$ be the unique representation with $c \in Lead(C_0)$. Then $r(x) = \lambda$. If $\alpha > 0$, then $\alpha x = \alpha \lambda c$. Thus by the uniqueness of representation, we get that $r(\alpha x) = \alpha \lambda = \alpha r(x)$. For $\alpha < 0$, it suffices to prove that $- c \in Lead(C_0)$ whenever $c \in Lead(C_0)$. Let $c \in Lead(C_0)$. Then $- c \in C_0$ as $C_0$ is balanced. Assume that $- c = \beta d$ for some $d \in Lead(C_0)$. Then $c = \beta (- d)$ with $- d \in C_0$. Now by the definition of $Lead(C_0)$ we get $- d = c$ and $\beta = 1$. Thus $- c \in Lead(C_0)$. Finally, we show that $r$ is sub-additive. For this, let $x, y \in X_0$. Without any loss of generality, we assume that $x \ne 0$ and $y \ne 0$. Let $x = \alpha c$ and $y = \beta d$ be the unique representations with $c, d \in Lead(C_0)$ and $\alpha > 0$, $\beta > 0$. Then $r(x) = \alpha$ and $r(y) = \beta$. Now $x + y = (\alpha + \beta) z$ where $z = \left( \frac{\alpha}{\alpha + \beta} c + \frac{\beta}{\alpha + \beta} d \right) \in C_0$. Thus 
	$$r(x + y) = r((\alpha + \beta) z) = (\alpha + \beta) r(z) \le \alpha + \beta = r(x) + r(y).$$
	This completes the proof.
\end{proof} 
\begin{remark}
	The converse of Theorem \ref{nls} is trivial. Let $(V, \Vert\cdot\Vert)$ be a normed linear space. Then $B = \lbrace v \in V: \Vert v \Vert \le 1 \rbrace$ is a non-empty, radially compact, absolutely convex set in $V$ with $Lead(B) = \lbrace v \in B: \Vert v \Vert = 1 \rbrace$.
\end{remark} 

\subsection{Centre of a convex set} 
\begin{definition}
	Let $B$ be non-empty convex set in a real vector space $X$. An element $b_0 \in B$ is said to be a \emph{centre} of  $B$, if for each $b \in B$, there exists a (unique) $b' \in B$ such that $b_0 = \frac{1}{2} b + \frac{1}{2} b'$. 
\end{definition} 
Note that if $C$ is an absolutely convex set, then $0$ is a centre of $C$. 
\begin{proposition}\label{17}
	Let $B$ be a non-empty convex set in a real vector space $X$ with a centre $b_0$. Put $B_0 = B - b_0$ and assume that $B_0$ is radially compact. 
	\begin{enumerate}
		\item Then $B_0$ balanced and convex. 
		\item If $x \in Lead(B_0)$, then $- x \in Lead(B_0)$. 
		\item Put $K = \lbrace x + b_0: x \in Lead(B_0) \rbrace$. Then for each $b \in B$ with $b \ne b_0$, there exist a unique pair $b_1, b_2 \in K$ with $\frac{1}{2} b_1 + \frac{1}{2} b_2 = b_0$ and $\frac{1}{2} < \alpha \le 1$ such that $b = \alpha b_1 + (1 - \alpha) b_2$. 
	\end{enumerate}
\end{proposition}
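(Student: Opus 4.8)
The plan is to dispatch the three parts in order, extracting balancedness from the centre property in (1), bootstrapping to lead-point symmetry in (2), and then reducing (3) to the unique radial decomposition furnished by Lemma \ref{l1}. For part (1), I would first observe that $B_0 = B - b_0$ is convex as a translate of the convex set $B$ and that $0 = b_0 - b_0 \in B_0$, so the only real content is symmetry. Given $x \in B_0$, write $x = b - b_0$ with $b \in B$; the centre property provides $b' \in B$ with $b_0 = \frac{1}{2}b + \frac{1}{2}b'$, equivalently $b' = 2b_0 - b$, and hence $-x = b_0 - b = b' - b_0 \in B_0$. Thus $-B_0 \subseteq B_0$, and applying the same to $-x$ yields $-B_0 = B_0$. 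Since a convex set containing $0$ automatically satisfies $\lambda B_0 \subseteq B_0$ for $\lambda \in [0,1]$, this symmetry upgrades $B_0$ to a balanced (hence absolutely convex) set; being radially compact by hypothesis, it is then a legitimate choice of $C_0$ in Lemma \ref{l1} and Theorem \ref{nls}.

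For part (2) I would simply repeat the argument already used inside the proof of Theorem \ref{nls}. If $c \in Lead(B_0)$ and $-c = \lambda y$ for some $y \in B_0$ and $\lambda \in [0,1]$, then $c = \lambda(-y)$ with $-y \in B_0$ by the balancedness from part (1); the defining property of $Lead(B_0)$ forces $\lambda = 1$ and $-y = c$, i.e. $y = -c$. Hence $-c \in Lead(B_0)$.

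For part (3), given $b \in B$ with $b \ne b_0$, I would set $x = b - b_0 \ne 0$ in $B_0$ and invoke Lemma \ref{l1} to obtain a unique $c \in Lead(B_0)$ and a unique $\lambda \in (0,1]$ with $x = \lambda c$. Taking $b_1 = c + b_0$ and $b_2 = -c + b_0$, both of which lie in $K$ by part (2), one checks at once that $\frac{1}{2}b_1 + \frac{1}{2}b_2 = b_0$. Setting $\alpha = \frac{1+\lambda}{2}$, a direct computation gives $\alpha b_1 + (1-\alpha)b_2 = (2\alpha - 1)c + b_0 = \lambda c + b_0 = b$, while $\lambda \in (0,1]$ translates into $\frac{1}{2} < \alpha \le 1$. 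For uniqueness, suppose $b = \alpha b_1 + (1-\alpha)b_2$ with $b_i = c_i + b_0$, $c_i \in Lead(B_0)$, the constraint $\frac{1}{2}b_1 + \frac{1}{2}b_2 = b_0$ forcing $c_2 = -c_1$, and $\frac{1}{2} < \alpha \le 1$. Then $b - b_0 = (2\alpha - 1)c_1$ exhibits $x$ as a scalar in $(0,1]$ times a lead point, so the uniqueness clause of Lemma \ref{l1} gives $c_1 = c$ and $2\alpha - 1 = \lambda$, i.e. $\alpha = \frac{1+\lambda}{2}$; this in turn pins down $b_1$, $b_2$, and $\alpha$.

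I expect the only genuinely delicate point to be the bookkeeping in the uniqueness of part (3): as an \emph{unordered} pair the decomposition would not be unique, since one may swap $b_1 \leftrightarrow b_2$ and replace $\alpha$ by $1-\alpha$. The normalization $\frac{1}{2} < \alpha \le 1$ is precisely what singles out the dominant lead point $c$ and thereby makes the ordered pair $(b_1, b_2)$ together with $\alpha$ unique. Everything else is routine once part (1) identifies $B_0$ as an absolutely convex, radially compact set, after which Lemma \ref{l1} and part (2) do the remaining work.
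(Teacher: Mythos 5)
Your proof is correct and follows essentially the same route as the paper's: translate to $B_0$, extract symmetry from the centre property, and reduce part (3) to the unique lead-point decomposition of Lemma \ref{l1}, with the normalization $\frac{1}{2} < \alpha \le 1$ handling uniqueness exactly as in the paper. The only immaterial deviation is in part (2), where you verify the lead-point definition for $-c$ directly rather than first invoking Lemma \ref{l1} to decompose $-x$; the two arguments amount to the same computation.
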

\begin{proof}
	(1). As $B$ is convex, so is $B_0$. Also, $0 \in B_0$ for $b_0 \in B$. Let $x \in B_0$. Then $x = b - b_0$ for some $b \in B$. As $b_0$ is a centre of $B$, there exists a unique $b' \in B$ such that $\frac{1}{2} b + \frac{1}{2} b' = b_0$. Thus $- x = b' - b_0 \in B_0$. Also, for $0 \le \lambda \le 1$, we have $\lambda x = \lambda x + (1 - \lambda) 0 \in B_0$ so that $B_0$ is balanced as well.
	
	(2). First observe that $- x \in B_0$ as $B_0$ is balance. As $x \in Lead(B_0)$, we have $x \ne 0$ so that there exist a unique $x_1 \in Lead(B_0)$ and a unique $\lambda \in (0, 1]$ such that $- x = \lambda x_1$. Then $x = \lambda (- x_1)$. Now, by the definition of a lead point, we get $\lambda = 1$ and consequently, $- x = x_1 \in Lead(B_0)$.
	
	(3). Let $b = x + b_0$ for some $x \in B_0$. As $b \ne b_0$, we have $x \ne 0$. Thus there exist a unique $x_1 \in Lead(B_0)$ and a unique $\lambda \in (0, 1]$ such that $x = \lambda x_1$. Put $\alpha = \frac{1 + \lambda}{2}$. Then $\frac{1}{2} < \alpha \le 1$. For $b_1 = x_1 + b_0$ and $b_2 = - x_1 + b_0$, we have $\frac{1}{2} b_1 + \frac{1}{2} b_2 = b_0$. Also, by (2), we have $b_1, b_2 \in K$. Thus 
	$$b = x + b_0 = (1 - \lambda) b_0 + \lambda b_1 = (1 - \lambda) \left( \frac{1}{2} b_1 + \frac{1}{2} b_2 \right) + \lambda b_1 = \alpha b_1 + (1 - \alpha) b_2.$$ 
	Next, let $b_3, b_4 \in K$ with $\frac{1}{2} b_3 + \frac{1}{2} b_4 = b_0$ and $\beta \in \mathbb{R}$ with $\frac{1}{2} < \beta \le 1$ be such that $b = \beta b_3 + (1 - \beta) b_4$. Let $b_3 = x_2 + b_0$ for some $x_2 \in Lead(B_0)$. Then $b_4 = - x_2 + b_0$ and we have  
	$$x = \beta x_2 + (1 - \beta) (- x_2) = (2 \beta - 1) x_2.$$ 
	Now, by the uniqueness of a lead representation, we get $x_2 = x_1$ and 
	$$2 \beta - 1 = \lambda = 2 \alpha - 1$$ 
	so that $\beta = \alpha$.
\end{proof}
\begin{corollary}
	Let $B$ be a non-empty convex set in a real vector space $X$ and assume that $b_0 \in B$ be a centre of $B$. If $B_0 = B - b_0$ be radially compact, then $b_0$ is the only centre of $B$.
\end{corollary}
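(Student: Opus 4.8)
The plan is to exploit the fact that a centre is nothing but a centre of symmetry. The first step is to record the clean reformulation already implicit in Proposition \ref{17}: an element $b_0 \in B$ is a centre of $B$ if and only if $B_0 := B - b_0$ satisfies $B_0 = -B_0$. Indeed, the defining relation $b_0 = \frac12 b + \frac12 b'$ says exactly that $b' = 2 b_0 - b \in B$, so the reflection $b \mapsto 2 b_0 - b$ carries $B$ onto itself; translating by $-b_0$ this is precisely the statement that $B_0$ is symmetric about the origin, which is the content of Proposition \ref{17}(1).

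Now suppose $c \in B$ is a second centre of $B$, and write $d = c - b_0$. Applying the same observation to $c$, I would obtain that $B_1 := B - c$ satisfies $B_1 = -B_1$. Since $B_1 = B_0 - d$, combining $B_1 = -B_1$ with $B_0 = -B_0$ gives
$$B_0 - d = -(B_0 - d) = -B_0 + d = B_0 + d,$$
and hence, translating both sides by $d$, the invariance $B_0 = B_0 + 2d$. Thus $B_0$ is stable under translation by $2d$.

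Iterating this translation invariance yields $B_0 = B_0 + 2nd$ for every integer $n$, and since $0 \in B_0$ (because $b_0 \in B$), I would conclude that $2nd \in B_0$ for all $n \in \mathbb{Z}$. If $d \neq 0$, then the set $\lbrace \alpha \in \mathbb{R} : \alpha d \in B_0 \rbrace$ contains the unbounded set $\lbrace 2n : n \in \mathbb{Z} \rbrace$, contradicting the radial compactness of $B_0$, which forces that set to be compact and hence bounded. Therefore $d = 0$, i.e. $c = b_0$, establishing uniqueness.

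The only genuine obstacle here is bookkeeping: translating the centre condition into the symmetry statement $B_0 = -B_0$ and keeping the signs straight when the two symmetry relations are combined. Once the translation invariance $B_0 = B_0 + 2d$ is in hand, radial compactness closes the argument at once, so no genuinely hard step is expected.
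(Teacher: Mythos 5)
Your proof is correct and follows essentially the same route as the paper's: both arguments show that arbitrarily large integer multiples of the difference $d = c - b_0$ (the paper's $u_0 = b_1 - b_0$) lie in $B_0$ and then invoke radial compactness of $B_0$ to force $d = 0$. The only difference is presentational --- you package the inductive step as the translation invariance $B_0 = B_0 + 2d$ obtained from the two symmetry relations, whereas the paper runs an explicit induction producing $n u_0 \in B_0$ for each $n$ using the balancedness of $B_0$ and the centre property of the second centre.
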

\begin{proof}
	Let $b_1 \in B$ be another centre of $B$. Put $u_0 = b_1 - b_0$. Then $u_0 \in B_0$. We show that $u_0 = 0$. As $b_1$ is a centre of $B$ and $b_0 \in B$. There exists $b_0' \in B$ such that $b_1 = \frac 12 b_0 + \frac 12 b_0'$. Thus $2 u_0 = b_0' - b_0 \in B_0$. Assume that $n u_0 \in B_0$ for some $n \in \mathbb{N}$. Then $b_2 := - n u_0 + b_0 \in B$ as $B_0$ is balanced. Thus there exists $c \in B$ such that $b_1 = \frac 12 b_2 + \frac 12 c$. It follows that $(n + 2) u_0 = (c - b_0) \in B_0$. Now, by induction, $n u_0 \in B_0$ for all $n \in \mathbb{N}$. Since $B_0$ is radially compact, we must have $u_0 = 0$. Hence $b_1 = b_0$. 
\end{proof}

\section{Affine functions of a central compact convex set} 

Let $B$ be a compact and convex set in a real locally convex Hausdorff space $X$ with a centre $b_0$ and consider the compact, balanced, convex set $B_0 = B - b_0$. Recall that a function $f: B \to \mathbb{R}$ is called \emph{affine}, if 
$$f(\alpha b_1 + (1 - \alpha) b_2) = \alpha f(b_1) + (1 - \alpha) f(b_2)$$
for all $b_1, b_2 \in B$ and $\alpha \in [0, 1]$.
Let 
$$A(B) = \lbrace f: B \to \mathbb{R}| ~ f ~ \textrm{is affine and continuous} \rbrace.$$
As $B$ is compact and convex, $A(B)$ is an order unit space with the cone 
$$A^+(B) = \lbrace f \in A(B): f(b) \ge 0 ~ \textrm{for all} ~ b \in B \rbrace$$
and the order unit ${\bf 1}: B \to \mathbb{R}$ given by ${\bf 1}(b) = 1$ for all $b \in B$. Also the order unit norm is the sup-norm on $A(B)$ as a closed subspace of $C(B)$. (For a detailed discussion on this topic, please refer to \cite[Chapter II.1]{A71}.) 

In this section, we describe the space of real valued, continuous, affine functions on a central, compact, convex set in a real locally convex Hausdorff space. 
\begin{theorem}\label{centralccs}
	Let $B$ be central compact and convex set in a real locally convex Hausdorff space $X$ with the centre $b_0$ and consider the closed subspace 
	$$A_0(B) := \lbrace f \in A(B): f(b_0) = 0 \rbrace$$ of $A(B)$. Then $A(B)$ is unitally order isomorphic to the order unit space $(A_0(B)^{(\cdot)}, e)$. 
\end{theorem}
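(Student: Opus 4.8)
The plan is to exhibit an explicit linear bijection between $A(B)$ and $A_0(B)^{(\cdot)} = A_0(B) \times \mathbb{R}$ and to verify directly that it carries the cone $A^+(B)$ onto $A_0(B)^{(\cdot)+}$ while fixing the units. The natural candidate is the map $\Phi : A(B) \to A_0(B)^{(\cdot)}$ defined by $\Phi(f) = (f - f(b_0)\mathbf{1},\, f(b_0))$. Since $f - f(b_0)\mathbf{1}$ vanishes at $b_0$, it indeed lies in $A_0(B)$, and $\Phi$ is visibly linear with inverse $(g,\alpha) \mapsto g + \alpha\mathbf{1}$, so $\Phi$ is a bijection. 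Because $\mathbf{1}(b_0) = 1$, we have $\Phi(\mathbf{1}) = (0,1) = e$, so $\Phi$ is unital. The content of the theorem is therefore the single equivalence $f \in A^+(B) \iff \Vert f - f(b_0)\mathbf{1} \Vert_\infty \le f(b_0)$, which is exactly the membership condition for $A_0(B)^{(\cdot)+}$ as described in Theorem \ref{day}.

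The key step, and the place where the hypothesis that $b_0$ is a centre is actually used, is a symmetry observation about the range of an affine function. For each $b \in B$ the point $b' = 2b_0 - b$ again lies in $B$ (it is the unique partner guaranteed by the definition of a centre, since $\frac{1}{2} b + \frac{1}{2} b' = b_0$), and affineness gives $f(b') = 2 f(b_0) - f(b)$. Writing $\alpha = f(b_0)$, this says that the set of values $\lbrace f(b) : b \in B \rbrace$ is symmetric about $\alpha$: if $f(b) = \alpha + t$ then $f(b') = \alpha - t$. Consequently, if $M = \sup_{b \in B} f(b)$ and $m = \inf_{b \in B} f(b)$, then $M - \alpha = \alpha - m$ and hence $\Vert f - \alpha\mathbf{1} \Vert_\infty = M - \alpha = \alpha - m$.

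With this symmetry in hand the equivalence is immediate. Since $\Vert f - f(b_0)\mathbf{1} \Vert_\infty = \alpha - m$, the inequality $\Vert f - f(b_0)\mathbf{1} \Vert_\infty \le \alpha$ holds if and only if $m \ge 0$, that is, if and only if $f(b) \ge 0$ for all $b \in B$, which is precisely $f \in A^+(B)$. Thus $\Phi(A^+(B)) = A_0(B)^{(\cdot)+}$, and $\Phi$ is a unital bijection mapping one positive cone onto the other; hence $\Phi$ is a unital order isomorphism, and being unital between order unit spaces it is automatically isometric for the two order unit norms. I expect the only subtle point to be bookkeeping around the cases $\alpha \le 0$ (where both sides force $f = 0$ or fail simultaneously), but the symmetry of the range handles these uniformly, so no separate argument should be needed.
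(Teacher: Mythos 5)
Your proof is correct, and it follows the same overall architecture as the paper: the same map $f \mapsto (f - f(b_0)\mathbf{1},\, f(b_0))$, and the same pivotal identity $A^+(B) = \lbrace f \in A(B) : \Vert f - f(b_0)\mathbf{1}\Vert_\infty \le f(b_0)\rbrace$, which is the paper's Lemma \ref{19}. Where you differ is in how you establish that identity and in how you finish. The paper proves Lemma \ref{19} by invoking Proposition \ref{17}(3), i.e.\ the lead-point decomposition $b = \alpha b_1 + (1-\alpha) b_2$ with $b_1, b_2 \in K$ and $\tfrac12 b_1 + \tfrac12 b_2 = b_0$, which in turn rests on radial compactness of $B_0$ and Lemma \ref{l1}. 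You instead use only the involution $b \mapsto b' = 2b_0 - b$ of $B$ coming directly from the definition of a centre, observe that affineness forces the value set of $f$ to be symmetric about $f(b_0)$, and read off the equivalence from $M - \alpha = \alpha - m$; this is more elementary and bypasses the lead-point machinery entirely. Second, the paper separately proves the norm identity $\Vert f\Vert_\infty = \Vert f - f(b_0)\mathbf{1}\Vert_\infty + \vert f(b_0)\vert$ (Lemma \ref{20}) to see that $\chi$ is isometric, whereas you get the isometry for free from the general fact that a unital order isomorphism of order unit spaces preserves the order unit norms --- which is all the theorem as stated requires, though the explicit identity of Lemma \ref{20} is reused elsewhere in the paper, so your shortcut trades away a formula the author wants on record. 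Your closing worry about the case $f(b_0) \le 0$ is indeed a non-issue: the symmetry handles it uniformly, exactly as you say.
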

We shall prove this result with the help of the following lemmas.
\begin{lemma}\label{19}
	$A^+(B) = \lbrace f \in A(B): \Vert f - f(b_0) {\bf 1} \Vert_{\infty} \le f(b_0) \rbrace$.
\end{lemma}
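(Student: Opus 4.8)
The plan is to exploit the defining symmetry of the centre $b_0$ together with the affineness of $f$ in order to turn the one-sided positivity condition into a two-sided estimate. The key observation is that, since $b_0$ is a centre of $B$, every $b \in B$ has a (unique) reflected partner $b' \in B$ with $\frac{1}{2} b + \frac{1}{2} b' = b_0$, that is, $b' = 2 b_0 - b$. Applying the affine function $f$ and using $f(b_0) = \frac{1}{2} f(b) + \frac{1}{2} f(b')$ yields the identity $f(b') = 2 f(b_0) - f(b)$, which says precisely that the scalars $f(b) - f(b_0)$ occur in sign-reversed pairs $\pm(f(b) - f(b_0))$ as $b$ ranges over $B$. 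This reflection identity is the whole engine of the proof.

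For the inclusion $A^+(B) \subseteq \{f : \Vert f - f(b_0){\bf 1}\Vert_\infty \le f(b_0)\}$, I would fix $f \in A^+(B)$ and $b \in B$. Positivity of $f$ at both $b$ and its partner $b'$ gives $f(b) \ge 0$ and, via the identity above, $f(b) = 2 f(b_0) - f(b') \le 2 f(b_0)$. Hence $0 \le f(b) \le 2 f(b_0)$, which is equivalent to $\vert f(b) - f(b_0)\vert \le f(b_0)$; taking the supremum over $b \in B$ gives $\Vert f - f(b_0){\bf 1}\Vert_\infty \le f(b_0)$. Note that $f(b_0) \ge 0$ holds automatically because $b_0 \in B$, so the stated inequality is meaningful. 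For the reverse inclusion I would assume $\Vert f - f(b_0){\bf 1}\Vert_\infty \le f(b_0)$; then for every $b \in B$ we have $f(b_0) - f(b) \le \vert f(b) - f(b_0)\vert \le f(b_0)$, whence $f(b) \ge 0$, so $f \in A^+(B)$.

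I do not anticipate a genuine obstacle here: once the reflection identity $f(b') = 2 f(b_0) - f(b)$ is in hand, both inclusions are immediate rearrangements of scalar inequalities. The only points requiring care are that the existence of the partner $b'$ uses $b_0$ being a centre (the defining property of a centre, in force throughout this section), and that $\Vert \cdot \Vert_\infty$ is the sup-norm inherited from $C(B)$, so that $\Vert f - f(b_0){\bf 1}\Vert_\infty = \sup_{b \in B} \vert f(b) - f(b_0)\vert$ exactly as used above.
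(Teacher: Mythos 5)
Your proof is correct, and it takes a more elementary route than the paper's. For the inclusion $A^+(B) \subseteq \lbrace f : \Vert f - f(b_0){\bf 1}\Vert_{\infty} \le f(b_0) \rbrace$, the paper invokes Proposition \ref{17}(3): it writes $b = \alpha b_1 + (1-\alpha) b_2$ with $b_1, b_2 \in K$ (translated lead points of $B_0$) satisfying $\frac{1}{2}b_1 + \frac{1}{2}b_2 = b_0$, and deduces $0 \le f(b) \le \max\lbrace f(b_1), f(b_2)\rbrace \le 2 f(b_0)$. You instead reflect $b$ itself through the centre, using only the definition of a centre to produce $b' = 2b_0 - b \in B$ and the identity $f(b') = 2f(b_0) - f(b)$; positivity of $f$ at $b'$ then gives $f(b) \le 2f(b_0)$ directly. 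Both arguments arrive at the same scalar inequality $0 \le f(b) \le 2f(b_0)$, which is equivalent to $\vert f(b) - f(b_0)\vert \le f(b_0)$, and the converse inclusions are essentially identical. What your version buys is economy: it bypasses the lead-point machinery and the radial compactness of $B_0$ that Proposition \ref{17} presupposes (harmless here, since $B$ is compact, but still an extra layer), and it avoids the separate treatment of the case $b = b_0$. It also reads more cleanly than the paper's converse direction, which carries the expression $\max\lbrace f(b_1), f(b_2)\rbrace$ into a context where $b_1$ and $b_2$ are no longer defined. Your added remark that $f(b_0) \ge 0$ is automatic for $f \in A^+(B)$ is a worthwhile sanity check that the paper leaves implicit.
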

\begin{proof}
	First, we assume that $f \in A^+(B)$ so that $f(b) \ge 0$ for all $b \in B$. Fix $b \in B$ with $b\ne b_0$. Then by Proposition \ref{17}(3), there exist a unique pair $b_1, b_2 \in K$ with $\frac{1}{2} b_1 + \frac{1}{2} b_2 = b_0$ and a unique $\alpha \in \mathbb{R}$ with $\frac{1}{2} < \alpha \le 1$ such that $b = \alpha b_1 + (1 - \alpha) b_2$. Thus 
	$$f(b) = \alpha f(b_1) + (1 - \alpha) f(b_2) \le \max \lbrace f(b_1), f(b_2) \rbrace.$$ 
	Again, as $\frac{1}{2} b_1 + \frac{1}{2} b_2 = b_0$, we have 
	$$\frac{1}{2} f(b_1) + \frac{1}{2} f(b_2) = f(b_0)$$ 
	so that 
	$$0 \le f(b) \le \max \lbrace f(b_1), f(b_2) \rbrace \le 2 f(b_0).$$ 
	Thus $\vert f(b) - f(b_0) \vert \le f(b_0)$ for all $b \in B$. Therefore, $\Vert f - f(b_0) {\bf 1} \Vert_{\infty} \le f(b_0)$. 
	
	Conversely, we assume that $f \in A(B)$ with $\Vert f - f(b_0) {\bf 1} \Vert_{\infty} \le f(b_0)$. Then for each $b \in B$, we have $\vert f(b) - f(b_0) \vert \le f(b_0)$ or equivalently, 
	$$0 \le f(b) \le \max \lbrace f(b_1), f(b_2) \rbrace \le 2 f(b_0).$$ 
	Thus $f \in A(B)^+$. 
\end{proof}
\begin{lemma}\label{20}
	For each $f \in A(B)$, we have 
	$$\Vert f \Vert_{\infty} = \Vert f - f(b_0) {\bf 1} \Vert_{\infty} + \vert f(b_0) \vert.$$
\end{lemma}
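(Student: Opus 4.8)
The plan is to reduce the claimed identity to a clean symmetry statement about functions vanishing at the centre. Write $c = f(b_0)$ and set $g = f - c\,{\bf 1}$. Then $g \in A(B)$ with $g(b_0) = 0$, and since $\Vert g \Vert_{\infty} = \Vert f - f(b_0){\bf 1}\Vert_{\infty}$, the assertion becomes
$$\Vert f \Vert_{\infty} = \Vert g \Vert_{\infty} + \vert c \vert.$$
One inequality is immediate: from $f = g + c\,{\bf 1}$ and the triangle inequality for $\Vert\cdot\Vert_{\infty}$ we get $\Vert f \Vert_{\infty} \le \Vert g \Vert_{\infty} + \vert c \vert$. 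The content of the lemma is therefore the reverse inequality $\Vert f \Vert_{\infty} \ge \Vert g \Vert_{\infty} + \vert c \vert$.

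For the reverse inequality I would exploit the central symmetry of $B$ supplied by Proposition \ref{17}(1). For each $b \in B$ the balancedness of $B_0 = B - b_0$ gives $b^{*} := 2 b_0 - b \in B$, since $b^{*} - b_0 = -(b - b_0) \in B_0$; moreover $\tfrac{1}{2} b + \tfrac{1}{2} b^{*} = b_0$. Applying affineness of $g$ and using $g(b_0) = 0$ yields $g(b^{*}) = -\,g(b)$. Consequently, for every fixed $b \in B$,
$$\max\bigl\{\,\vert g(b) + c\vert,\ \vert g(b^{*}) + c\vert\,\bigr\} = \max\bigl\{\,\vert g(b) + c\vert,\ \vert{-g(b)} + c\vert\,\bigr\} = \vert g(b)\vert + \vert c\vert.$$
Since both $g(b) + c = f(b)$ and $g(b^{*}) + c = f(b^{*})$ are values of $f$ on $B$, this shows $\Vert f\Vert_{\infty} \ge \vert g(b)\vert + \vert c\vert$ for each $b$, and taking the supremum over $b \in B$ gives $\Vert f\Vert_{\infty} \ge \Vert g\Vert_{\infty} + \vert c\vert$, completing the proof.

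The only delicate point is the reflection step: one must know that the point $b^{*} = 2b_0 - b$ again lies in $B$ and that $g$ changes sign under this reflection. Both facts rest entirely on Proposition \ref{17}(1) (balancedness of $B_0$) together with the affineness of $g$, so no new input is required. Note also that compactness of $B$ enters only through the fact that $\Vert\cdot\Vert_{\infty}$ is the genuine order unit (sup) norm on $A(B)$; the argument itself uses suprema rather than attained maxima, so no appeal to attainment of extrema is needed. Alternatively, one could run the same computation using the decomposition $b = \alpha b_1 + (1-\alpha) b_2$ of Proposition \ref{17}(3) in place of the reflection, but the reflection $b \mapsto 2b_0 - b$ makes the sign-change $g(b^{*}) = -g(b)$ most transparent.
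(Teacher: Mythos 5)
Your proof is correct, but it takes a genuinely different route from the paper's. The paper derives the identity from Lemma \ref{19} (the characterization $A^+(B) = \lbrace f : \Vert f - f(b_0){\bf 1}\Vert_\infty \le f(b_0)\rbrace$) by order-unit-norm manipulations: it applies Lemma \ref{19} to the positive elements $\Vert f\Vert_\infty {\bf 1} \pm f$ to get $\Vert f - f(b_0){\bf 1}\Vert_\infty \le \Vert f\Vert_\infty \pm f(b_0)$, and handles the easy direction by exhibiting $(\Vert f - f(b_0){\bf 1}\Vert_\infty + \vert f(b_0)\vert){\bf 1} \pm f$ as positive. You instead argue pointwise: the reflection $b \mapsto b^{*} = 2b_0 - b$ maps $B$ into $B$ (this is in fact immediate from the definition of a centre, even before invoking Proposition \ref{17}(1)), affineness forces $g(b^{*}) = -g(b)$ for $g = f - f(b_0){\bf 1}$, and the elementary identity $\max\lbrace \vert x+y\vert, \vert x-y\vert\rbrace = \vert x\vert + \vert y\vert$ then yields the reverse inequality directly. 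Your argument is more elementary and self-contained -- it bypasses Lemma \ref{19} entirely and makes the geometric mechanism (central symmetry forcing the $\ell_1$-splitting of the sup norm) completely transparent -- whereas the paper's version stays inside the order-theoretic calculus it is building and reuses the cone characterization it has just established. Both rest ultimately on the same symmetry of $B$ about $b_0$; there is no gap in your argument.
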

\begin{proof}
	Fix $f \in A(B)$. Since $\Vert \cdot \Vert_{\infty}$ is an order unit norm on $A(B)$, we have 
	$$\Vert f - f(b_0) {\bf 1} \Vert_{\infty} {\bf 1} \pm (f - f(b_0) {\bf 1}) \in A^+(B).$$ 
	As 
	$$\Vert f - f(b_0) {\bf 1} \Vert_{\infty} + \vert f(b_0) \vert = \max \lbrace \Vert f - f(b_0) {\bf 1} \Vert_{\infty} + f(b_0), \Vert f - f(b_0) {\bf 1} \Vert_{\infty} - f(b_0) \rbrace,$$ 
	we obtain that 
	$$(\Vert f - f(b_0) {\bf 1} \Vert_{\infty} + \vert f(b_0) \vert) {\bf 1} \pm f \in A^+(B).$$
	Thus $\Vert f \Vert_{\infty} \le \Vert f - f(b_0) {\bf 1} \Vert_{\infty} + \vert f(b_0) \vert$.
	
	Next, as $\Vert f \Vert_{\infty} {\bf 1} \pm f \in A^+(B)$, by Lemma \ref{19}, we have 
	$$\Vert (\Vert f \Vert_{\infty} {\bf 1} \pm f) - (\Vert f \Vert_{\infty} \pm f(b_0)) {\bf 1} \Vert_{\infty} \le (\Vert f \Vert_{\infty} \pm f(b_0))$$
	or equivalently, $$\Vert f - f(b_0) {\bf 1} \Vert_{\infty} \le (\Vert f \Vert_{\infty} \pm f(b_0)).$$ 
	Thus 
	\begin{eqnarray*}
		\Vert f - f(b_0) {\bf 1} \Vert_{\infty} + \vert f(b_0) \vert &=& \max \lbrace \Vert f - f(b_0) {\bf 1} \Vert_{\infty} + f(b_0), \Vert f - f(b_0) {\bf 1} \Vert_{\infty} - f(b_0) \rbrace \\
		&\le& \Vert f \Vert_{\infty}.
	\end{eqnarray*}
	Hence $\Vert f \Vert_{\infty} = \Vert f - f(b_0) {\bf 1} \Vert_{\infty} + \vert f(b_0) \vert$.
\end{proof} 
\begin{proof}[Proof of Theorem \ref{centralccs}]
	For $f \in A(B)$, we define $\theta(f): B \to \mathbb{R}$ given by $\theta(f)(b) = f(b) - f(b_0)$ for all $b \in B$. Then $\theta(f) = f - f(b_0) {\bf 1} \in A_0(B)$ so that by Lemma \ref{20}, we have 
	$$\Vert \theta(f) \Vert_{\infty} = \Vert f - f(b_0) {\bf 1} \Vert_{\infty} \le \Vert f \Vert_{\infty}.$$
	 Thus the map $\theta: A(B) \to A_0(B)$ given by $\theta(f) = f - f(b_0) {\bf 1}$ for all $f \in A(B)$ is a contractive, linear surjective map. We define $\chi: A(B) \to A_0(B)^{(\cdot)}$ given by $\chi(f) = (\theta(f), f(b_0))$ for all $f \in A(B)$. Then $\chi$ is a unital linear surjection. Also, by Lemmas \ref{19} and \ref{20}, $\chi$ is an order isomorphism.
\end{proof} 
\begin{example}
	Fix $n \in \mathbb{N}$ with $n \ge 2$ and consider the compact convex set 
	$$S_n = \lbrace (x_i) \in \ell_1^n: x_i \ge 0 ~ \textrm{and} ~ \sum_{i=1}^{n} x_i = 1 \rbrace.$$ 
	Then $B_n := \co (S_n \bigcup - S_n)$ is the closed unit ball of the base normed space $\ell_1^n$ and is a compact convex set with the centre $0$. We show that $A(B_n)$ is isometrically isomorphic to $(A(S_n)^{(\cdot)}, e)$. Note that $A(S_n)$ is isometrically isomorphic to $\ell_{\infty}^n$. 
	
	Let $f \in A_0(B_n)$. Then $f(- \alpha) = - f(\alpha)$ for all $\alpha \in B_n$. In fact if $\alpha \in B_n$, then $- \alpha \in B_n$ and we have $0 = \frac 12 \alpha + \frac 12 (- \alpha)$. Thus 
	$$0 = f(0) = \frac 12 f(\alpha) + \frac 12 f(- \alpha)$$ 
	so that $f(- \alpha) = - f(\alpha)$. Put $a_i = f(e_i)$ where $\lbrace e_i \rbrace$ is the standard unit vector basis of $\mathbb{R}^n$. We show that $f((x_i)) = \sum_{i=1}^{n} x_i f(e_i)$ for every $(x_i) \in B_n$. Fix $(x_i) \in B_n$. Then $\sum_{i=1}^{n} \vert x_i \vert \le 1$. Put $\epsilon_i = sign(x_i)$ for $1 \le i \le n$. Then $\epsilon_i = \pm 1$ and $x_i = \epsilon \vert x_i \vert$ for each $i$. Also, we have $\epsilon f(e_i) = f(\epsilon_i e_i)$ for each $i$. Put $\lambda_i = \vert x_i \vert$ for $1 \le i \le n$ and $\lambda_0 = 1 - \sum_{i=1}^{n} \vert x_i \vert$. Then $\lambda_i \ge 0$ for $0 \le i \le n$ with $\sum_{i=0}^{n} \lambda_i = 1$.  Thus as $f$ is affine, we have  
	\begin{eqnarray*}
		\sum_{i=1}^{n} x_i a_i &=& \sum_{i=1}^{n} \vert x_i \vert \epsilon_i f(e_i) \\
		&=& \sum_{i=1}^{n} \vert x_i \vert f(\epsilon_i e_i) \\ 
		&=& \lambda_0 f(0) + \sum_{i=1}^{n} \lambda_i f(\epsilon_i e_i) \\
		&=& f(\lambda_0 0 + \sum_{i=1}^{n} \lambda_i \epsilon_i e_i) \\ 
		&=& f((x_i)).
	\end{eqnarray*} 
	Next, let $f \in A(B_n)$. Define $f_0(x) = f(x) - f(0)$ for all $x \in B_n$. Then $f_0 \in A_0(B_n)$. Thus for any $(x_i) \in B_n$, we have 
	\begin{eqnarray*}
		f((x_i)) &=& f_0((x_i)) + f(0) \\ 
		&=& \sum_{i=1}^{n} x_i f_0(e_i) + f(0) \\ 
		&=& \sum_{i=1}^{n} x_i (f(e_i) - f(0)) + f(0).  
	\end{eqnarray*}
	Therefore, $f \mapsto (f(e_1), \dots, f(e_n))$ is linear isomorphism from $A_0(B_n)$ onto $\mathbb{R}^n$ and $f \mapsto (f(0), f(e_1) - f(0), \dots, f(e_n) - f(0))$ is linear isomorphism from $A(B_n)$ onto $\mathbb{R}^{n+1}$. 
	
	Next, let $f \in A(B_n)$. Then 
	\begin{eqnarray*}
		\Vert f \Vert_{\infty} &=& \sup \lbrace \vert f((x_i)) \vert: (x_i) \in B_n \rbrace \\ 
		&=& \sup \left\lbrace \left\vert \sum_{i=1}^{n} x_i (f(e_i) - f(0)) + f(0) \right\vert: \sum_{i=1}^{n} \vert x_i \vert \le 1 \right\rbrace \\ 
		&=& \sup \left\lbrace \left\vert \sum_{i=1}^{n} x_i (f(e_i) - f(0)) \right\vert + \vert f(0) \vert: \sum_{i=1}^{n} \vert x_i \vert \le 1 \right\rbrace \\ 
		&=& \vert f(0) \vert + \max \lbrace \vert f(e_i) - f(0) \vert: 1 \le i \le n \rbrace \\ 
		&=& \vert f(0) \vert + \Vert (f(e_1) - f(0), \dots, f(e_n) - f(0)) \Vert_{\infty}.
	\end{eqnarray*} 
	Hence $A_0(B_n)$ is isometrically isomorphic to $\ell_{\infty}^n$ and $A(B_n)$ is isometrically isomorphic to $\ell_{\infty}^n \oplus_1 \mathbb{R}$. Since $A(S_n)$ is isometrically isomorphic to $\ell_{\infty}^n$, we get that $A(B_n) \cong A(S_n)^{(\cdot)}$ as order unit spaces. 
\end{example} 

\section{Base with a centre} 

Let $(V, B)$ be a base normed space \cite[Proposition II.1.12]{A71}. Then there exists a unique strictly positive $e \in V^*$ with $\Vert e \Vert = 1$ such that $e(v) = \Vert v \Vert$ if and only if $v \in V^+$. In particular, $B = \lbrace v \in V^+: e(v) = 1 \rbrace$ \cite[Lemma 9.3 and Proposition 9.4]{WN73}. In this section, we describe the base normed space with a central base. 
\begin{theorem}\label{centre}
	Let $(V, B)$ be a base normed space. For a fixed $b_0 \in B$, the following statements are equivalent: 
	\begin{enumerate}[$(i)$]
		\item $b_0$ is a centre of $B$; 
		\item $V^+ = \lbrace v \in V: \Vert v - e(v) b_0 \Vert \le e(v) \rbrace$; 
		\item $B = \lbrace v \in V: e(v) = 1 ~ \textrm{and} ~ \Vert v - b_0 \Vert \le 1 \rbrace$.
	\end{enumerate} 
\end{theorem}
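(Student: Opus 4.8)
The plan is to treat the three conditions as two essentially independent equivalences: the passage $(ii)\Leftrightarrow(iii)$ is purely formal once one uses the identity $B=\lbrace v\in V^+:e(v)=1\rbrace$ together with the normalisation $v\mapsto v/e(v)$, while the substance of the theorem lies in $(i)\Leftrightarrow(iii)$. Throughout, the one tool I would isolate first is the \emph{reflection} $\rho(b)=2b_0-b$: under either hypothesis $(i)$ or $(iii)$ this map sends $B$ into $B$, it is an involution fixing $b_0$, and it is exactly the operation producing the companion point $b'$ in the definition of a centre. I would record at the outset that $e(\rho(b))=2e(b_0)-e(b)=1$ whenever $e(b)=1$, so that the only real content in checking $\rho(b)\in B$ is positivity.

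For $(ii)\Rightarrow(iii)$ I would simply intersect the description of $V^+$ in $(ii)$ with the hyperplane $\lbrace e=1\rbrace$, on which $\Vert v-e(v)b_0\Vert\le e(v)$ collapses to $\Vert v-b_0\Vert\le 1$. For $(iii)\Rightarrow(ii)$ I would split on the size of $e(v)$: the inequality $\Vert v-e(v)b_0\Vert\le e(v)$ forces $e(v)\ge 0$, and if $e(v)=0$ it forces $v=0\in V^+$; if $e(v)>0$ I would rescale $w=v/e(v)$, note $e(w)=1$ and $\Vert w-b_0\Vert\le 1$, invoke $(iii)$ to get $w\in B\subseteq V^+$, and conclude $v=e(v)w\in V^+$. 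The reverse inclusion $V^+\subseteq\lbrace v:\Vert v-e(v)b_0\Vert\le e(v)\rbrace$ is the same rescaling run backwards, using $e(v)=\Vert v\Vert$ on $V^+$. For $(iii)\Rightarrow(i)$ I would apply the reflection directly: given $b\in B$, the point $b'=\rho(b)$ satisfies $e(b')=1$ and $\Vert b'-b_0\Vert=\Vert b_0-b\Vert\le 1$, hence $b'\in B$ by $(iii)$, and $\tfrac12 b+\tfrac12 b'=b_0$ with $b'$ manifestly the unique such point.

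The heart of the argument is $(i)\Rightarrow(iii)$. The inclusion $B\subseteq\lbrace v:e(v)=1,\ \Vert v-b_0\Vert\le 1\rbrace$ is easy: the centre gives $b'\in B$ with $b-b_0=\tfrac12(b-b')$, and since $b-b'=1\cdot b-1\cdot b'$ is a difference of two base points, the base norm yields $\Vert b-b'\Vert\le 2$, whence $\Vert b-b_0\Vert\le 1$. For the opposite inclusion I would first reduce to showing that $e(w)=1$ and $\Vert w-b_0\Vert<1$ imply $w\in B$. Since the base norm is the Minkowski gauge of $\co(B\cup -B)$, the strict inequality $\Vert w-b_0\Vert<1$ gives $w-b_0=\lambda b_1-(1-\lambda)b_2$ for some $b_1,b_2\in B$ and $\lambda\in[0,1]$; applying $e$ forces $2\lambda-1=e(w-b_0)=0$, i.e. $\lambda=\tfrac12$, and then substituting $b_0-\tfrac12 b_2=\tfrac12\rho(b_2)$ rewrites $w=\tfrac12 b_1+\tfrac12\rho(b_2)$ as a convex combination of base points, so $w\in B$.

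The main obstacle is the boundary case $\Vert w-b_0\Vert=1$, where the strict-inequality decomposition above breaks down, since one then only controls $\lambda$ approximately. I would dispose of it by a limiting argument, for which the key observation is that the base $B$ is norm-closed: since $V^+=\lbrace v\in V:e(v)=\Vert v\Vert\rbrace$ and both $e$ and $\Vert\cdot\Vert$ are continuous, $V^+$ is closed, hence so is $B=V^+\cap\lbrace e=1\rbrace$. Given $w$ with $e(w)=1$ and $\Vert w-b_0\Vert=1$, the points $w_t=b_0+t(w-b_0)$ for $t\in[0,1)$ satisfy $e(w_t)=1$ and $\Vert w_t-b_0\Vert=t<1$, so lie in $B$ by the previous step, and $w_t\to w$ forces $w\in B$. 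This completes $(i)\Rightarrow(iii)$ and closes the chain of implications.
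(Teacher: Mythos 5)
Your proof is correct and follows essentially the same route as the paper: the decisive step in both is to decompose $w-b_0$ (resp.\ $v-e(v)b_0$) as $\lambda b_1-\mu b_2$ in $\co(B\cup -B)$, use $e$ to force $\lambda=\mu$, and then use the reflection $b_2\mapsto 2b_0-b_2$ supplied by the centre to turn the combination into a positive one, while your $(iii)\Rightarrow(i)$ via $b'=2b_0-b$ is verbatim the paper's argument. The only organizational differences are that you pivot both equivalences through $(iii)$ whereas the paper runs the cycle $(i)\Rightarrow(ii)\Rightarrow(iii)\Rightarrow(i)$ with the auxiliary norm inequality of Lemma \ref{b1}, and that you add a closure argument for the boundary case $\Vert w-b_0\Vert=1$, which the paper sidesteps by taking a base-norm decomposition with $\lambda+\mu$ exactly equal to the norm (legitimate here because $\co(B\cup -B)$ is radially compact and hence coincides with the closed unit ball).
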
 
We use the following result to prove Theorem \ref{centre}. 
\begin{lemma}\label{b1}
	Let $(V, B)$ be a base normed space and assume that $b_0 \in B$ is a centre of $B$. Then for each $v \in V$, we have 
	$$\Vert v \Vert \ge \max \lbrace \Vert v - e(v) b_0 \Vert, \vert e(v) \vert \rbrace.$$
\end{lemma}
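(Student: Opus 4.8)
The inequality $\Vert v\Vert \ge \vert e(v)\vert$ is immediate: since $\Vert e\Vert = 1$ we get $\vert e(v)\vert \le \Vert e\Vert\,\Vert v\Vert = \Vert v\Vert$. The substance of the lemma is therefore the estimate $\Vert v - e(v) b_0\Vert \le \Vert v\Vert$, and the plan is to deduce it from the defining formula for the base norm, namely
$$\Vert v\Vert = \inf\lbrace \lambda + \mu : v = \lambda b_1 - \mu b_2,\ b_1, b_2 \in B,\ \lambda, \mu \ge 0 \rbrace,$$
which expresses that $\co(B \cup -B)$ is the closed unit ball of $(V,B)$.

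The crux is the observation that the centre $b_0$ forces $B - b_0$ to sit inside the closed unit ball, that is, $\Vert b - b_0\Vert \le 1$ for every $b \in B$. Indeed, since $b_0$ is a centre of $B$, the point $b' := 2 b_0 - b$ again lies in $B$, and then
$$b - b_0 = \tfrac{1}{2} b - \tfrac{1}{2} b' = \tfrac{1}{2} b + \tfrac{1}{2}(-b')$$
exhibits $b - b_0$ as a convex combination of $b \in B$ and $-b' \in -B$; hence $b - b_0 \in \co(B \cup -B)$ and $\Vert b - b_0\Vert \le 1$. The point I would stress is that the naive triangle-inequality bound $\Vert b - b_0\Vert \le \Vert b\Vert + \Vert b_0\Vert = 2$ is useless here, and it is precisely the symmetry supplied by the centre that improves it to $1$.

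To finish, I would start from an arbitrary representation $v = \lambda b_1 - \mu b_2$ with $b_1, b_2 \in B$ and $\lambda, \mu \ge 0$. Since $e(b_i) = 1$ we have $e(v) = \lambda - \mu$, so
$$v - e(v) b_0 = \lambda (b_1 - b_0) - \mu (b_2 - b_0).$$
Replacing $b_i - b_0$ by $\tfrac{1}{2} b_i - \tfrac{1}{2} b_i'$ (with $b_i' = 2 b_0 - b_i \in B$) as above and regrouping the positive and negative terms, one rewrites $v - e(v) b_0$ as $\tfrac{\lambda + \mu}{2} p_1 - \tfrac{\lambda + \mu}{2} p_2$, where $p_1, p_2 \in B$ are the appropriate convex combinations of the four base points involved (the case $\lambda + \mu = 0$ being the trivial $v = 0$). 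This representation has weight $\lambda + \mu$, so $\Vert v - e(v) b_0\Vert \le \lambda + \mu$; taking the infimum over all representations of $v$ yields $\Vert v - e(v) b_0\Vert \le \Vert v\Vert$. Equivalently, one may phrase the whole argument by noting that $Pv := v - e(v) b_0$ is the linear projection of $V$ onto $\ker e$ along $b_0$, and the sub-claim says exactly that $P$ carries $B$, and hence the balanced set $\co(B \cup -B)$, into the unit ball, so that $P$ is contractive.

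The only genuine obstacle is the central sub-claim $\Vert b - b_0\Vert \le 1$; once the centre property is invoked to split $b - b_0$ into a halved difference of two base points, the remainder is bookkeeping with the base-norm formula.
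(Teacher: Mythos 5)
Your proposal is correct and follows essentially the same route as the paper: both take a representation $v = \lambda b_1 - \mu b_2$, use the centre to write $b_i - b_0 = \tfrac{1}{2}(b_i - b_i')$, and regroup to exhibit $v - e(v)b_0$ with total weight $\lambda + \mu$, giving $\Vert v - e(v)b_0\Vert \le \Vert v\Vert$ (your version, which takes the infimum over all representations rather than assuming one with $\lambda+\mu=\Vert v\Vert$ exists, is if anything slightly more careful). The only cosmetic difference is that you bound $\vert e(v)\vert$ via $\Vert e\Vert = 1$ while the paper uses $\vert\lambda-\mu\vert\le\lambda+\mu$; both are immediate.
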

\begin{proof}
	Let $v = \lambda b_1 - \mu b_2$ for some $b_1, b_2 \in B$ and $\lambda, \mu \ge 0$ with $\lambda + \mu = \Vert v \Vert$. Then $e(v) = \lambda - \mu$ so that 
	\begin{eqnarray*}
		v - e(v) b_0 &=& \lambda b_1 - \mu b_2 - (\lambda - \mu) b_0 \\ 
		&=& \lambda (b_1 - b_0) - \mu (b_2 - b_0) \\ 
		&=& \lambda \left( \frac{b_1 - b'_1}{2} \right) - \mu \left( \frac{b_2 - b'_2}{2} \right) \\ 
		&=& \frac{1}{2} \left\lbrace (\lambda b_1 - \mu b_2) - (\lambda b'_1 - \mu b'_2) \right\rbrace.
	\end{eqnarray*}
	Thus $\Vert v - e(v) b_0 \Vert \le \lambda + \mu = \Vert v \Vert$. Also, we have  $\vert e(v) \vert = \vert \lambda - \mu \vert \le \lambda + \mu$ which completes the proof. 
\end{proof}
\begin{proof}[Proof of Theorem \ref{centre}] (i) implies (ii): 
	
	Let us assume that $b_0$ is a centre of $B$. If $v \in V^+$, then $e(v) = \Vert v \Vert$. Thus by Lemma \ref{b1}, $\Vert v - e(v) b_0 \Vert \le e(v)$. Now assume that $\Vert v - e(v) b_0 \Vert \le e(v)$ for some $v \in V$. Find $b_1, b_2 \in B$ and $\lambda, \mu \ge 0$ such that $v - e(v) b_0 = \lambda b_1 - \mu b_2$ and $\Vert v - e(v) b_0 \Vert = \lambda + \mu$. Then 
	$$0 = e(v - e(v) b_0) = e(\lambda b_1 - \mu b_2) = \lambda - \mu$$ 
	so that $2 \lambda = 2 \mu = \Vert v - e(v) b_0 \Vert \le e(v)$. Thus 
	$$v = \lambda (b_1 - b_2) + e(v) b_0 = \lambda b_1 + \lambda b'_2 + (e(v) - 2 \lambda) b_0 \in V^+.$$
	Thus $V^+ = \lbrace v \in V: \Vert v - e(v) b_0 \Vert \le e(v) \rbrace$.
	
	(ii) implies (iii):
	
	Next we assume that $V^+ = \lbrace v \in V: \Vert v - e(v) b_0 \Vert \le e(v) \rbrace$. Let $b \in B$. Then $b \in V^+$ and $\Vert b \Vert = 1 = e(b)$. Thus by Lemma \ref{b1}, $\Vert b - b_0 \Vert \le 1$. Put $u = b - b_0$. Then $e(u) = 0$, $\Vert u \Vert \le 1$ and we have $b = u + b_0$. Conversely, let $u \in V$ with $e(u) = 0$ and $\Vert u \Vert \le 1$ and put $b = u + b_0$. Then $e(b) = 1$ and $\Vert b - e(b) b_0 \Vert = \Vert u \Vert \le 1 = e(b)$. Thus by the assumption, $b \in V^+$ so that $\Vert b \Vert = e(b) = 1$, that is, $b \in B$. Therefore, $B = \lbrace v \in V: e(v) = 1 ~ \textrm{and} ~ \Vert v - b_0 \Vert \le 1 \rbrace$.
	
	(iii) implies (i): 
	
	Finnaly assume that $B = \lbrace v \in V: e(v) = 1 ~ \textrm{and} ~ \Vert v - b_0 \Vert \le 1 \rbrace$. Let $b \in B$ and consider $b' := 2 b_0 - b$. Then $e(b') = 1$ and 
	$$b' - e(b') b_0 = 2 b_0 - b - b_0 = b_0 - b.$$
	As $b \in B$, by the assumption, we have $\vert b' - e(b') b_0 \Vert \le 1$ so that $b' \in B$. Thus $b_0$ is a centre of $B$.
\end{proof} 
\begin{corollary}\label{b4}
	Let $(V, B)$ be a base normed space with a centre $b_0$ of $B$. Then for each $v \in V$, we have $\Vert v \Vert = \max \lbrace \Vert v - e(v) b_0 \Vert, \vert e(v) \vert \rbrace$. 
\end{corollary}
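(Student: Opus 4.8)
The lower bound $\Vert v \Vert \ge \max \lbrace \Vert v - e(v) b_0 \Vert, \vert e(v) \vert \rbrace$ is exactly the content of Lemma \ref{b1}, so the plan is to establish only the reverse inequality. Fix $v \in V$, write $\alpha = e(v)$ and $u = v - \alpha b_0$, so that $e(u) = 0$ and $v = u + \alpha b_0$. Set $r = \max \lbrace \Vert u \Vert, \vert \alpha \vert \rbrace$; if $v = 0$ there is nothing to prove, so assume $r > 0$. Since the base norm of $v$ is the infimum of $\lambda + \mu$ over all representations $v = \lambda b_1 - \mu b_2$ with $b_1, b_2 \in B$ and $\lambda, \mu \ge 0$, it suffices to exhibit a single such representation with $\lambda + \mu = r$; this immediately yields $\Vert v \Vert \le \lambda + \mu = r$.

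The key tool is the characterisation $B = \lbrace w \in V: e(w) = 1 ~ \textrm{and} ~ \Vert w - b_0 \Vert \le 1 \rbrace$ from Theorem \ref{centre}(iii), which says precisely that $b_0 + u' \in B$ whenever $e(u') = 0$ and $\Vert u' \Vert \le 1$. I would then split into two cases mirroring the ``converse'' argument in the proof of Proposition \ref{adjbase}. If $\Vert u \Vert \le \vert \alpha \vert$ (so $\alpha \ne 0$), I set $b = \alpha^{-1} u + b_0$; then $e(b) = 1$ and $\Vert b - b_0 \Vert = \vert \alpha \vert^{-1} \Vert u \Vert \le 1$, so $b \in B$, and $v = \alpha b$ gives $\Vert v \Vert \le \vert \alpha \vert = r$. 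If instead $\vert \alpha \vert < \Vert u \Vert$, I put $y = \Vert u \Vert^{-1} u$, so that $e(y) = 0$ and $\Vert y \Vert = 1$, whence $b_1 = b_0 + y$ and $b_2 = b_0 - y$ both lie in $B$. Choosing $\lambda = \tfrac{1}{2}(\Vert u \Vert + \alpha)$ and $\mu = \tfrac{1}{2}(\Vert u \Vert - \alpha)$, which are nonnegative with $\lambda + \mu = \Vert u \Vert = r$, a direct computation gives $\lambda b_1 - \mu b_2 = (\lambda + \mu) y + (\lambda - \mu) b_0 = u + \alpha b_0 = v$, so $\Vert v \Vert \le \Vert u \Vert = r$. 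In either case $\Vert v \Vert \le r$, which together with Lemma \ref{b1} yields the asserted equality.

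Conceptually this corollary is the predual counterpart of Proposition \ref{adjbase}: Theorem \ref{centre}(iii) identifies $B$ with $b_0 + \lbrace w \in \ker e: \Vert w \Vert \le 1 \rbrace$, so that the linear bijection $(w, \alpha) \mapsto w + \alpha b_0$ of $\ker e \times \mathbb{R}$ onto $V$ carries the base $B^{(\cdot)}$ of that construction onto $B$, and the formula $\max \lbrace \Vert w \Vert, \vert \alpha \vert \rbrace$ of Proposition \ref{adjbase} transports to the claimed expression. I expect no genuine obstacle here, since the substantive estimate has already been isolated in Lemma \ref{b1}; the only care needed is the bookkeeping of the case split and of the degenerate instances $v = 0$, $\alpha = 0$, or $u = 0$, each of which lands cleanly in one of the two cases above.
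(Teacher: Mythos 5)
Your proposal is correct and follows essentially the same route as the paper: the lower bound is delegated to Lemma \ref{b1}, and the upper bound is obtained by exhibiting the representation $v = \lambda(b_0+y) - \mu(b_0-y)$ with $y = \Vert u\Vert^{-1}u$ and $2\lambda = \Vert u\Vert + e(v)$, $2\mu = \Vert u\Vert - e(v)$, which is exactly the paper's construction. The only cosmetic difference is that the paper phrases the case split as $v \in V^+ \cup -V^+$ versus not (equivalent to your $\Vert u\Vert \le \vert e(v)\vert$ split via Theorem \ref{centre}(ii)), so no substantive divergence.
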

\begin{proof}
	If $v \in V^+ \cup - V^+$, then $\Vert v \Vert = \vert e(v) \vert$. Thus by Lemma \ref{b1}, we get $\Vert v \Vert = \max \lbrace \Vert v - e(v) b_0 \Vert, \vert e(v) \vert \rbrace$. Next, let $v \notin V^+ \cup - V^+$. Then by Theorem \ref{centre}, we have $\vert e(v) \vert < \Vert v - e(v) b_0 \Vert$. Put $u = \frac{v - e(v) b_0}{\Vert v - e(v) b_0 \Vert}$. Then $e(u) = 0$ and $\Vert u \Vert = 1$ so that $k = u + b_0 \in B$, by Theorem \ref{centre}. Also then $k' = - u + b_0$. Next, put $2 \lambda = \Vert v - e(v) b_0 \Vert + e(v)$ and $2 \mu = \Vert v - e(v) b_0 \Vert - e(v)$. Then $\lambda, \mu > 0$ with $\lambda + \mu = \Vert v - e(v) b_0 \Vert$ and $\lambda - \mu = e(v)$. Thus 
	$$\lambda k - \mu k' = (\lambda + \mu) u + (\lambda - \mu) b_0 = v - e(v) b_0 + e(v) b_0 = v.$$ 
	Now, it follows that 
	$$\Vert v \Vert \le \lambda + \mu = \Vert v - e(v) b_0 \Vert$$ 
	so that $\Vert v \Vert = \max \lbrace \Vert v - e(v) b_0 \Vert, \vert e(v) \vert \rbrace$.
\end{proof} 
\begin{corollary}\label{u}
	A centre of $B$, if it exists, is unique.
\end{corollary}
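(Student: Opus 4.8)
The plan is to read off uniqueness directly from the characterization in Theorem \ref{centre}, so that — in contrast with the corollary of Section 2 — no radial compactness hypothesis is needed; the work has effectively already been done. Suppose $b_0$ and $b_1$ are both centres of $B$. Since each is a centre, part $(iii)$ of Theorem \ref{centre} supplies two descriptions of one and the same set,
$$B = \lbrace v \in V: e(v) = 1 ~ \textrm{and} ~ \Vert v - b_0 \Vert \le 1 \rbrace = \lbrace v \in V: e(v) = 1 ~ \textrm{and} ~ \Vert v - b_1 \Vert \le 1 \rbrace.$$
In other words, on the affine hyperplane $\lbrace v: e(v) = 1 \rbrace$ the two closed unit balls centred at $b_0$ and at $b_1$ cut out exactly the same slice, and the remaining task is the standard fact that a ball of fixed positive radius determines its centre, transported to this slice.

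First I would record that $u := b_1 - b_0$ satisfies $e(u) = e(b_1) - e(b_0) = 0$, so that translating any point of $\lbrace e = 1 \rbrace$ by a scalar multiple of $u$ keeps it on $\lbrace e = 1 \rbrace$. Assume, towards a contradiction, that $u \ne 0$, hence $\Vert u \Vert > 0$, and construct the witness point $w := b_0 - \Vert u \Vert^{-1} u$. Because $e(u) = 0$ we have $e(w) = 1$, and $\Vert w - b_0 \Vert = 1$, so the first description gives $w \in B$. On the other hand $w - b_1 = -(1 + \Vert u \Vert^{-1}) u$, whence $\Vert w - b_1 \Vert = \Vert u \Vert + 1 > 1$, which contradicts the second description. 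Therefore $u = 0$, that is, $b_0 = b_1$.

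I do not expect a genuine obstacle here, since Theorem \ref{centre}$(iii)$ carries the entire burden; the only point requiring a little care is the choice of the witness $w$, which must simultaneously lie on the hyperplane $\lbrace e = 1 \rbrace$ (guaranteed by $e(u) = 0$) and on the boundary of the $b_0$-ball, so that pushing directly away from $b_1$ violates the $b_1$-ball maximally. One could equally run the argument through Corollary \ref{b4} by evaluating $\Vert \cdot \Vert = \max \lbrace \Vert \cdot - e(\cdot) b_i \Vert, \vert e(\cdot) \vert \rbrace$ for $i = 0, 1$ on the slice $e = 1$, but this reduces to the same coincidence of balls, so I would prefer the more transparent witness-point formulation above.
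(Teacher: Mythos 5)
Your proof is correct and is essentially the same as the paper's: the paper also writes $b_1 = u_1 + b_0$ with $e(u_1)=0$, forms the same witness point $b_2 = b_0 - \Vert u_1\Vert^{-1}u_1 \in B$, and derives the same contradiction $\Vert b_2 - b_1\Vert = 1 + \Vert u_1\Vert > 1$ from Theorem \ref{centre}. No further comment is needed.
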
 
\begin{proof}
	Let $b_1 \in B$ be another centre of $B$. Then by Theorem \ref{centre}, $b_1 = u_1 + b_0$ for some $u_1 \in V$ with $e(u_1) = 0$ and $\Vert u_1 \Vert \le 1$. If $b_1 \ne b_0$, then $u_1 \ne 0$. In which case, $b_2 = - \Vert u_1 \Vert^{-1} u_1 + b_0 \in B$. Since $b_1$ is a centre, by Theorem \ref{centre}, we must have $\Vert b_2 - b_1 \Vert \le 1$. But this is not true as $\Vert b_2 - b_1 \Vert = 1 + \Vert u_1 \Vert > 1$. Hence $b_1 = b_0$. 
\end{proof} 
Combining Theorem \ref{centre} and Corollary \ref{b4}, we may deduce the following
\begin{theorem}\label{0b}
	 A base normed space $(V, B)$ with a centre $b_0 \in B$ is isometrically isomorphic to $\left(V_0^{(\cdot)}, B_0^{(\cdot)}\right)$ where 
	$V_0 = \lbrace v \in V: e(v) = 0 \rbrace$
	and 
	$$B_0^{(\cdot)} = \lbrace (v, 1): v \in V_0 ~ \textrm{with} ~ \Vert v \Vert \le 1 \rbrace.$$ 
	Here $e$ is the order unit of $V^*$ corresponding to the base $B$.
\end{theorem}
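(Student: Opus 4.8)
The plan is to exhibit an explicit linear isomorphism between $V$ and $V_0^{(\cdot)} = V_0 \times \mathbb{R}$ and then to read off the isometry and the base correspondence directly from Corollary \ref{b4} and Theorem \ref{centre}, which do all the real work. The guiding observation is that since $b_0 \in B$ we have $e(b_0) = 1$, so for every $v \in V$ the element $v - e(v) b_0$ lies in $V_0 = \ker e$, because $e(v - e(v) b_0) = e(v) - e(v) e(b_0) = 0$. This makes the coordinate change $v \mapsto (v - e(v) b_0,\, e(v))$ the natural candidate.

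Accordingly, I would define $\Phi \colon V \to V_0^{(\cdot)}$ by $\Phi(v) = (v - e(v) b_0,\, e(v))$. Linearity is immediate from the linearity of $e$, and I would verify bijectivity by producing the explicit inverse $\Psi(u, \alpha) = u + \alpha b_0$ for $(u, \alpha) \in V_0 \times \mathbb{R}$. Indeed $\Psi(\Phi(v)) = (v - e(v) b_0) + e(v) b_0 = v$, while, using $e(u + \alpha b_0) = e(u) + \alpha e(b_0) = \alpha$ for $u \in V_0$, one gets $\Phi(\Psi(u, \alpha)) = (u + \alpha b_0 - \alpha b_0,\, \alpha) = (u, \alpha)$.

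Next I would establish that $\Phi$ is isometric. By Proposition \ref{adjbase} the base norm on $V_0^{(\cdot)}$ is $\Vert (u, \alpha) \Vert_B = \max \lbrace \Vert u \Vert, \vert \alpha \vert \rbrace$, so $\Vert \Phi(v) \Vert_B = \max \lbrace \Vert v - e(v) b_0 \Vert, \vert e(v) \vert \rbrace$; but this is exactly $\Vert v \Vert$ by Corollary \ref{b4}. Hence $\Phi$ preserves norms. It then remains to check that $\Phi$ carries $B$ onto $B_0^{(\cdot)}$, which together with linearity forces the positive cones to correspond as well. If $v \in B$ then $e(v) = 1$, so $\Phi(v) = (v - b_0, 1)$ with $v - b_0 \in V_0$ and, by Theorem \ref{centre}(iii), $\Vert v - b_0 \Vert \le 1$; thus $\Phi(v) \in B_0^{(\cdot)}$. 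Conversely, given $(u, 1) \in B_0^{(\cdot)}$ we have $u \in V_0$ with $\Vert u \Vert \le 1$, and then $\Psi(u, 1) = u + b_0$ satisfies $e(u + b_0) = 1$ and $\Vert (u + b_0) - b_0 \Vert = \Vert u \Vert \le 1$, so $u + b_0 \in B$ again by Theorem \ref{centre}(iii). Therefore $\Phi(B) = B_0^{(\cdot)}$, and $\Phi$ is the desired isometric isomorphism of base normed spaces.

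I do not expect a substantial obstacle here: the result is essentially a repackaging of Corollary \ref{b4} (which supplies the isometry) and Theorem \ref{centre}(iii) (which supplies the base correspondence) through the single coordinate change $v \mapsto (v - e(v) b_0,\, e(v))$. The only points requiring care are the bookkeeping ones, namely confirming $e(b_0) = 1$ so that the first coordinate genuinely lands in $V_0$, and matching the two $\max$-formulas for the norms.
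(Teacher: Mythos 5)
Your proof is correct and follows exactly the route the paper intends: the paper gives no explicit proof, saying only that the theorem follows by ``combining Theorem \ref{centre} and Corollary \ref{b4},'' and your map $\Phi(v) = (v - e(v) b_0,\, e(v))$ with the isometry from Corollary \ref{b4} and the base correspondence from Theorem \ref{centre}(iii) is precisely that combination, worked out in full.
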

\begin{example}
	Consider $V_n = \ell_1^n(\mathbb{R})$. Then $B_n = \lbrace (\alpha_i): \alpha_i  \ge 0 ~ \textrm{and} ~ \sum \alpha_i = 1 \rbrace$ is the base for $V_n$ so that $(V_n, B_n)$ is a base normed space. For $n \ge 3$, $B_n$ can not have a centre. To see this fix $b_0 = (\alpha_i^0) \in B_n$ so that $\sum \alpha_i^0 = 1$. Also, if $\alpha_m^0 = \inf \lbrace \alpha_i^0: 1 \le i \le n \rbrace$, then $\alpha_m^0 \le \frac{1}{n}$. Consider the $m$-th coordinate vector $e_m \in V_n$. Then $e_m \in B_n$ and we have 
	$$\Vert e_m - b_0 \Vert_1 = 1 - \alpha_m^0 + \sum_{i \ne m} \alpha_i^0 = 2 (1 - \alpha_m^0) \ge 2 (1 - \frac{1}{n}) > 1.$$ 
	Thus $b_0$ is not a centre of $B_n$.
\end{example}

\section{Order unit spaces with a central state space} 

We now describe a non-dual version of $A(B)$ where $B$ is a central, compact convex subset of a real locally convex space.
\begin{proposition}\label{dualo}
	Let $(V, e)$ be an order unit space and let $\tau \in S(V)$. Then the following statements are equivalent: 
	\begin{enumerate}
		\item $\tau$ is a centre of $S(V)$;
		\item $v \le 2 \tau(v) e$ for every $v \in V^+$;
		\item $V^+ = \lbrace v \in V: \Vert v - \tau(v) e \Vert \le \tau(v) \rbrace$.
	\end{enumerate}
	In this case, we say that $\tau$ is a central state.
\end{proposition}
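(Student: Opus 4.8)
The plan is to reduce all three conditions to a single inequality, $\rho(v) \le 2\tau(v)$, quantified over $v \in V^+$ and $\rho \in S(V)$, using two standard structural facts about an Archimedean order unit space $(V,e)$: that the order is determined by states, i.e.\ $w \in V^+$ if and only if $\rho(w) \ge 0$ for every $\rho \in S(V)$; and that the order unit norm satisfies $\Vert w \Vert \le r$ (for $r \ge 0$) if and only if $-re \le w \le re$. Both are recorded in \cite[Chapter II.1]{A71}. Once each of (1), (2), (3) is seen to be a restatement of this inequality, the equivalences essentially fall out.

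First I would reformulate condition (1). Since the requirement $\tfrac12\rho + \tfrac12\rho' = \tau$ forces $\rho' = 2\tau - \rho$, the ``unique'' clause in the definition of centre is automatic, and since $(2\tau - \rho)(e) = 1$ holds for free, the element $2\tau - \rho$ lies in $S(V)$ precisely when it is positive, that is, when $\rho(v) \le 2\tau(v)$ for all $v \in V^+$. Hence $\tau$ is a centre of $S(V)$ if and only if $\rho(v) \le 2\tau(v)$ holds for every $\rho \in S(V)$ and every $v \in V^+$.

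For (1)$\Leftrightarrow$(2), I would fix $v \in V^+$ and observe that the order inequality $v \le 2\tau(v)e$ says exactly that $2\tau(v)e - v \in V^+$, which by the order-determined-by-states fact is equivalent to $\rho(v) \le 2\tau(v)$ for all $\rho \in S(V)$. Quantifying over $v \in V^+$, condition (2) becomes word for word the reformulation of (1) from the previous paragraph. For (2)$\Leftrightarrow$(3), I would set $W = \lbrace v \in V : \Vert v - \tau(v)e \Vert \le \tau(v)\rbrace$ and apply the norm--order characterization: $\Vert v - \tau(v)e \Vert \le \tau(v)$ is equivalent to $-\tau(v)e \le v - \tau(v)e \le \tau(v)e$, i.e.\ to the two inequalities $0 \le v$ and $v \le 2\tau(v)e$. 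Thus $W = \lbrace v \in V^+ : v \le 2\tau(v)e \rbrace$, which is contained in $V^+$ automatically, so $V^+ = W$ holds if and only if every $v \in V^+$ satisfies $v \le 2\tau(v)e$, which is precisely condition (2).

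I do not expect a genuine obstacle: this proposition is the order-unit-space counterpart of Theorem \ref{centre}, with the roles of order unit and base functional interchanged, and the argument is correspondingly short. The only point demanding care is to invoke the two correct facts about Archimedean order unit spaces—positivity tested by states, and the order description of the norm ball—so that conditions (1), (2) and (3) can all be read off from the inequality $\rho(v) \le 2\tau(v)$ without any further computation.
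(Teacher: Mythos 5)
Your proposal is correct and follows essentially the same route as the paper: the paper also establishes $(1)\Leftrightarrow(2)$ by observing that $2\tau - f \in S(V)$ exactly when $f(2\tau(v)e - v)\ge 0$ for all $v \in V^+$ (positivity tested by states), and $(2)\Leftrightarrow(3)$ by rewriting $\Vert v - \tau(v)e\Vert \le \tau(v)$ as $\tau(v)e \pm (v - \tau(v)e) \in V^+$. Your packaging of all three conditions as the single inequality $\rho(v)\le 2\tau(v)$ is only an organizational difference, not a different argument.
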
 
\begin{proof}
	(1) implies (2): We assume that $\tau$ is a centre of $S(V)$. Let $v \in V^+$. If $f \in S(V)$, then by assumption, $2 \tau - f \in S(V)$. Thus $f(2 \tau(v) e -v) = 2 \tau(v) - f(v) \ge 0$ for all $f \in S(V)$ so that $v \le 2 \tau(v) e$. 
	
	(2) implies (1): Now, we assume that $v \le 2 \tau(v) e$ for each $v \in V^+$. Let $f \in S(V)$. Put $f' = 2 \tau - f$. Then $f'$ is linear and $f'(e) = 1$. We show that it is positive. Let $v \in V^+$. Then by assumption, $v \le 2 \tau(v) e$. As $f \ge 0$, we have $f'(v) = 2 \tau(v) - f(v) = f(2 \tau(v) e - v) \ge 0$. Thus $f' \in S(V)$. 
	
	(2) implies (3): We again assume that $v \le 2 \tau(v) e$ for each $v \in V^+$. Thus if $v \in V^+$, then $v \le 2 \tau(v) e$, or equivalently, $\tau(v) e \pm (v - \tau(v) e) \in V^+$. Therefore, $\Vert v - \tau(v) e \Vert \le \tau(v)$. Conversely, let $v \in V$ with $\Vert v - \tau(v) e \Vert \le \tau(v)$. Then $\tau(v) e \pm (v - \tau(v) e) \in V^+$ so that $2 \tau(v) e - v \in V^+$. Thus $V^+ = \lbrace v \in V: \Vert v - \tau(v) e \Vert \le \tau(v) \rbrace$. 
	
	(3) implies (2): Finally we assume that $V^+ = \lbrace v \in V: \Vert v - \tau(v) e \Vert \le \tau(v) \rbrace$. Then for $v \in V^+$ we have $\Vert v - \tau(v) e \Vert \le \tau(v)$. Thus $\tau(v) e \pm (v - \tau(v) e) \in V^+$. In particular, $v \le 2 \tau(v) e$.
\end{proof} 
The norm condition on positive elements of $V$ is somewhat minimal as we see in the next result. 
\begin{proposition}\label{orderq}
	Let $(V, e)$ be an order unit space with a central state $\tau \in S(V)$. Then for each $v \in V$, we have $\Vert v - \tau(v) e \Vert \le \Vert v - \alpha e \Vert$ for all $\alpha \in \mathbb{R}$.
\end{proposition}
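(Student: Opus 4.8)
The plan is to recast the one–variable minimization over $\alpha$ in terms of two intrinsic quantities attached to $v$, and then to use centrality to pin $\tau(v)$ to the exact optimum. Set
$$M(v) = \inf\{\mu \in \mathbb{R} : v \le \mu e\}, \qquad m(v) = \sup\{\nu \in \mathbb{R} : \nu e \le v\}.$$
Both numbers are finite because $-\Vert v\Vert e \le v \le \Vert v\Vert e$, and since $V^{+}$ is Archimedean the extremal inequalities $m(v)e \le v \le M(v)e$ actually hold. Unwinding the order unit norm, $\Vert v - \alpha e\Vert = \inf\{\lambda \ge 0 : (\alpha-\lambda)e \le v \le (\alpha+\lambda)e\}$, and the two order constraints are equivalent to $\lambda \ge M(v)-\alpha$ and $\lambda \ge \alpha - m(v)$; hence
$$\Vert v - \alpha e\Vert = \max\{M(v)-\alpha,\ \alpha - m(v)\}$$
for every $\alpha \in \mathbb{R}$. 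As the maximum of a decreasing and an increasing affine function of $\alpha$, the right-hand side is minimized exactly at the midpoint $\alpha = \tfrac12(M(v)+m(v))$, where its value is $\tfrac12(M(v)-m(v))$.

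The crux is to show that $\tau(v)$ is precisely this midpoint, and this is where the hypothesis that $\tau$ is a central state is indispensable. Merely applying $\tau$ to $m(v)e \le v \le M(v)e$ yields only $m(v) \le \tau(v) \le M(v)$, which is far too weak. Instead I would feed the two positive elements $M(v)e - v$ and $v - m(v)e$ into the central-state inequality $x \le 2\tau(x)e$ of Proposition \ref{dualo}(2). The first gives $M(v)e - v \le 2(M(v)-\tau(v))e$, which rearranges to $(2\tau(v)-M(v))e \le v$ and hence $2\tau(v) - M(v) \le m(v)$; the second gives $v - m(v)e \le 2(\tau(v)-m(v))e$, which rearranges to $v \le (2\tau(v)-m(v))e$ and hence $M(v) \le 2\tau(v) - m(v)$. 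Together these force the midpoint identity
$$2\tau(v) = M(v) + m(v).$$

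Combining the two steps completes the argument: by the midpoint identity,
$$\Vert v - \tau(v)e\Vert = \max\{M(v)-\tau(v),\ \tau(v)-m(v)\} = \tfrac12(M(v)-m(v)),$$
which is exactly the minimum of $\alpha \mapsto \Vert v - \alpha e\Vert$ found in the first paragraph, so $\Vert v - \tau(v)e\Vert \le \Vert v - \alpha e\Vert$ for all $\alpha$. I expect the sole genuine obstacle to be the midpoint identity $2\tau(v) = M(v)+m(v)$: the norm formula and its minimization are routine, but this identity fails for a general state and depends entirely on the central-state characterization. (As a cosmetic simplification one may first replace $v$ by $u = v-\alpha e$; since $u - \tau(u)e = v - \tau(v)e$, the claim reduces to $\Vert u - \tau(u)e\Vert \le \Vert u\Vert$, so only the $\alpha=0$ case of the norm formula is needed, while the midpoint computation via $M$ and $m$ is unchanged.)
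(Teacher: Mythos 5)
Your argument is correct, but it is genuinely different from the one in the paper. The paper works entirely in the dual: for each state $f$ it uses the reflected state $f' = 2\tau - f$ (equivalence (1)$\Leftrightarrow$(2) of Proposition \ref{dualo}) and the triangle inequality $\vert f(v) - f'(v)\vert \le \vert f(v)-\alpha\vert + \vert f'(v)-\alpha\vert \le 2\Vert v - \alpha e\Vert$, which, since $f - f' = 2(f-\tau)$, gives $\vert f(v - \tau(v)e)\vert \le \Vert v - \alpha e\Vert$ for every state $f$, and hence the claim after taking the supremum over $S(V)$. You instead work in the primal, introducing $M(v)$ and $m(v)$, establishing the standard formula $\Vert v - \alpha e\Vert = \max\lbrace M(v)-\alpha, \alpha - m(v)\rbrace$, and then using condition (2) of Proposition \ref{dualo} applied to the positive elements $M(v)e - v$ and $v - m(v)e$ to derive the midpoint identity $2\tau(v) = M(v)+m(v)$; your two rearrangements are correct, and the Archimedean property you invoke (to get $m(v)e \le v \le M(v)e$) is part of the definition of an order unit space, so there is no gap. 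The paper's proof is shorter and needs no auxiliary quantities; yours buys strictly more information, namely the explicit value $\Vert v - \tau(v)e\Vert = \tfrac12(M(v)-m(v))$ and the fact that $\tau(v)$ is the \emph{unique} minimizer of $\alpha \mapsto \Vert v - \alpha e\Vert$, and the midpoint identity is itself a clean intrinsic characterization of the central state worth recording.
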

\begin{proof}
	Fix $\alpha \in \mathbb{R}$. Then 
	$$\vert f(v) - \alpha \vert = \vert f(v - \alpha e) \vert \le \Vert v - \alpha e \Vert$$ 
	for every $f \in S(V)$. By Proposition \ref{dualo}, given $f \in S(V)$, there exists a unique $f' \in S(V)$ such that $f + f' = 2 \tau$. Thus 
	$$\vert f(v) - f'(v) \vert \le \vert f(v) - \alpha \vert + \vert f'(v) - \alpha \vert \le 2 \Vert v - \alpha e \Vert.$$ 
	Since $f - f' = 2(f - \tau)$, we get 
	$$\vert f(v - \tau(v) e) \vert = \vert f(v) - \tau(v) \vert \le \Vert v - \alpha e \Vert$$ for all $f \in S(V)$. Hence $\Vert v - \tau(v) e \Vert \le \Vert v - \alpha e \Vert$ for all $\alpha \in \mathbb{R}$. 
\end{proof}
\begin{proposition}\label{spaceo}
	Let $(V, e)$ be an order unit space with a central state $\tau \in S(V)$. Then for each $v \in V$, we have 
		$$\Vert v \Vert = \Vert v - \tau(v) e \Vert + \vert \tau(v) \vert.$$
\end{proposition}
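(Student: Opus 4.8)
The plan is to prove the two inequalities separately, in close analogy with the proof of Lemma \ref{20}, using the order unit norm on one side and the cone description from Proposition \ref{dualo}(3) on the other.

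For the inequality $\Vert v \Vert \le \Vert v - \tau(v) e \Vert + \vert \tau(v) \vert$, I would first use that $\Vert \cdot \Vert$ is an order unit norm, so that $\Vert v - \tau(v) e \Vert e \pm (v - \tau(v) e) \in V^+$. Writing $v = (v - \tau(v) e) + \tau(v) e$ and adding the scalar multiple $(\vert \tau(v) \vert \pm \tau(v)) e \in V^+$ (the coefficient is nonnegative in each case), I obtain $(\Vert v - \tau(v) e \Vert + \vert \tau(v) \vert) e \pm v \in V^+$. By the definition of the order unit norm this yields the desired upper bound.

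For the reverse inequality, I would invoke Proposition \ref{dualo}(3): since $\tau$ is a central state, every $w \in V^+$ satisfies $\Vert w - \tau(w) e \Vert \le \tau(w)$. Applying this to $w = \Vert v \Vert e \pm v \in V^+$, I compute $\tau(w) = \Vert v \Vert \pm \tau(v)$ and $w - \tau(w) e = \pm(v - \tau(v) e)$, so the condition reads $\Vert v - \tau(v) e \Vert \le \Vert v \Vert \pm \tau(v)$. These two inequalities give $\Vert v - \tau(v) e \Vert + \tau(v) \le \Vert v \Vert$ and $\Vert v - \tau(v) e \Vert - \tau(v) \le \Vert v \Vert$, and taking the maximum over the two signs produces $\Vert v - \tau(v) e \Vert + \vert \tau(v) \vert \le \Vert v \Vert$.

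The main obstacle, such as it is, is purely the sign bookkeeping: recognizing that the two instances $w = \Vert v \Vert e \pm v$ collapse to the single quantity $\Vert v - \tau(v) e \Vert$ on the left (since $\Vert {\pm}(v - \tau(v) e) \Vert = \Vert v - \tau(v) e \Vert$) while the $\pm \tau(v)$ on the right is exactly what assembles into $\vert \tau(v) \vert$ after taking the maximum. Everything else is a direct transcription of the order unit manipulations already carried out in Lemma \ref{20}, with Proposition \ref{dualo}(3) playing the role that Lemma \ref{19} played there.
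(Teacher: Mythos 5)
Your proposal is correct and follows essentially the same route as the paper: the upper bound is the triangle inequality (which you spell out via the order unit cone, as in Lemma \ref{20}), and the reverse inequality is obtained exactly as in the paper by applying the characterization of $V^+$ from Proposition \ref{dualo}(3) to $\Vert v \Vert e \pm v$ and taking the maximum over the two signs. No gaps.
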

\begin{proof} 
	Fix $v \in V$. Then $\Vert v \Vert \le \Vert v - \tau(v) e \Vert + \vert \tau(v) \vert$. Also as $\Vert v \Vert e \pm v \in V^+$, by (1), we have 
	$$\Vert (\Vert v \Vert e \pm v) - \tau(\Vert v \Vert e \pm v) e \Vert \le \tau(\Vert v \Vert e \pm v).$$ 
	In other words, $\Vert v - \tau(v) e \Vert \le \Vert v \Vert \pm \tau(v)$ so that $\Vert v - \tau(v) e \Vert + \vert \tau(v) \vert \le \Vert v \Vert$. Hence $\Vert v \Vert = \Vert v - \tau(v) e \Vert + \vert \tau(v) \vert$. 
\end{proof} 
\begin{remark}
	Let $(V, e)$ be an order unit space with a central state $\tau \in S(V)$. Then $\tau$ is unique and we have 
	\begin{enumerate} 
		\item $V^{*+} = \lbrace f \in V^*: \Vert f - f(e) \tau \Vert \le f(e) \rbrace = \lbrace f + \alpha \tau: f(e) = 0 ~ \textrm{and} ~ \Vert f \Vert \le \alpha \rbrace$; 
		\item $S(V) = \lbrace f_0 + \tau: f_0 \in V^*, f_0(e) = 0 ~ \textrm{and} ~ \Vert f_0 \Vert \le 1 \rbrace$; and 
		\item For each $f \in V^*$, we have $\Vert f \Vert = \max \lbrace \Vert f - f(e) \tau \Vert, \vert f(e) \vert \rbrace$. 
	\end{enumerate} 
\end{remark} 

\begin{proposition}\label{orders}
	Let $(V, e)$ be an order unit space with a central state $\tau \in S(V)$. Then the Banach dual of $V_0 = \lbrace v \in V: \tau(v) = 0 \rbrace$ is isometrically isomorphic to $V_0^* := \lbrace f \in V^*: f(e) = 0 \rbrace$.
\end{proposition}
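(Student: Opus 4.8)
The plan is to exhibit the restriction map as the desired isometric isomorphism. Since $\tau$ is a state, $\tau(e) = 1$, so every $v \in V$ decomposes uniquely as $v = v_0 + \tau(v) e$ with $v_0 := v - \tau(v) e \in V_0$; thus $V = V_0 \oplus \mathbb{R} e$ as vector spaces (the sum is direct because $\alpha e \in V_0$ forces $\alpha = \tau(\alpha e) = 0$). I would then define $R \colon V_0^* \to (V_0)^*$ by $R(f) = f|_{V_0}$, the restriction to the hyperplane $V_0$, where $(V_0)^*$ denotes the Banach dual of the subspace $V_0$. Linearity is immediate, and the substance of the argument is to check that $R$ is a bijective isometry.

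For injectivity: if $f \in V_0^*$ (so $f(e) = 0$) vanishes on $V_0$, then for any $v = v_0 + \tau(v) e$ we get $f(v) = f(v_0) + \tau(v) f(e) = 0$, whence $f = 0$. For surjectivity, given $g \in (V_0)^*$ I would define its extension $f$ directly by $f(v) = g(v - \tau(v) e)$; this $f$ is linear, satisfies $f(e) = g(0) = 0$, and restricts to $g$ on $V_0$, so that $f \in V_0^*$ with $R(f) = g$, provided $f$ is bounded, which follows from the norm estimate below. It is worth flagging that a generic Hahn--Banach extension of $g$ would not annihilate $e$, so using this explicit extension is essential.

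The isometry is where the earlier results enter. The inequality $\Vert R(f) \Vert \le \Vert f \Vert$ is trivial, as $\Vert R(f) \Vert$ is a supremum of $|f|$ over the smaller set $\lbrace v_0 \in V_0 : \Vert v_0 \Vert \le 1 \rbrace$. For the reverse I would invoke Proposition \ref{spaceo}: for every $v \in V$ we have $\Vert v \Vert = \Vert v - \tau(v) e \Vert + |\tau(v)| \ge \Vert v - \tau(v) e \Vert = \Vert v_0 \Vert$. Hence for $\Vert v \Vert \le 1$ and $f \in V_0^*$, $|f(v)| = |f(v_0)| \le \Vert R(f) \Vert \, \Vert v_0 \Vert \le \Vert R(f) \Vert$, giving $\Vert f \Vert \le \Vert R(f) \Vert$; the same inequality applied to the extension $f$ of the surjectivity step shows that $f$ is bounded with $\Vert f \Vert = \Vert g \Vert$. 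Combining the two inequalities yields $\Vert R(f) \Vert = \Vert f \Vert$.

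I expect no serious obstacle here: once the splitting $V = V_0 \oplus \mathbb{R} e$ and the norm identity of Proposition \ref{spaceo} are available, the remainder is bookkeeping. The only points demanding care are the notational distinction between the Banach dual $(V_0)^*$ of the subspace and the subspace $V_0^* \subseteq V^*$, and the fact that the norm control must be drawn from Proposition \ref{spaceo} (equivalently Proposition \ref{orderq} with $\alpha = 0$) so that the extended functional can be kept annihilating $e$. Alternatively, one could package the whole argument by observing that $v \mapsto (v - \tau(v) e, \tau(v))$ identifies $V$ isometrically with $V_0 \oplus_1 \mathbb{R}$, so that $V^* \cong (V_0)^* \oplus_\infty \mathbb{R}$, under which $V_0^* = \lbrace f : f(e) = 0 \rbrace$ is precisely the $(V_0)^*$ summand and hence isometric to $(V_0)^*$.
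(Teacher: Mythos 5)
Your proposal is correct and follows essentially the same route as the paper: both use the restriction map $f \mapsto f|_{V_0}$, establish the isometry via the norm identity $\Vert v \Vert = \Vert v - \tau(v)e \Vert + \vert\tau(v)\vert$ of Proposition \ref{spaceo}, and obtain surjectivity from the explicit extension $f(v) = g(v - \tau(v)e)$. The extra care you take with injectivity and with keeping the extension annihilating $e$ is sound but adds nothing beyond the paper's argument.
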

\begin{proof}
	Consider the mapping $\chi: V_0^* \to (V_0)^*$ given by $f \mapsto f|_{V_0}$. Then $\chi$ is linear. Also for $f \in V_0^*$ we have 
	\begin{eqnarray*}
		\Vert f \Vert &=& \sup \lbrace \vert f(v) \vert: v \in V ~ \textrm{with} ~ \Vert v \Vert \le 1 \rbrace \\ 
		&=& \sup \lbrace \vert f(v - \tau(v) e) \vert: v \in V ~ \textrm{with} ~ \Vert v - \tau(v) e \Vert + \vert \tau(v) \vert \le 1 \rbrace \\ 
		&=& \sup \lbrace \vert f(v) \vert: v \in V_0 ~ \textrm{with} ~ \Vert v \Vert \le 1 \rbrace.
	\end{eqnarray*}
	Thus $\chi$ is an isometry. Further, for $f_0 \in (V_0)^*$, we define $f: V \to \mathbb{R}$ given by $f(v) = f_0(v - \tau(v) e)$ for all $v \in V$. Then $f \in V_0^*$ with $\chi(f) = f_0$.
\end{proof}
\begin{theorem}\label{orderq1}
	Let $(V, e)$ be an order unit space with a central state $\tau \in S(V)$. Then $V/{\mathbb{R} e}$ is isometrically isomorphic to $V_0 := \lbrace v \in V: \tau(v) = 0 \rbrace$ and $V$ is unitally and isometrically order isomorphic to $V_0 \oplus_1 \mathbb{R}$.
\end{theorem}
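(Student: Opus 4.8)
The plan is to realize both isomorphisms through the single splitting map $v \mapsto v - \tau(v) e$, which sends $V$ onto $V_0$ since $\tau(v - \tau(v) e) = \tau(v) - \tau(v) \tau(e) = 0$ (recall $\tau(e) = 1$, as $\tau$ is a state). Internally this yields the direct sum decomposition $V = V_0 \oplus \mathbb{R} e$, with the $\mathbb{R} e$-component read off by $\tau$; this algebraic picture drives everything that follows.

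For the first assertion I would define $\psi \colon V/\mathbb{R} e \to V_0$ by $\psi(v + \mathbb{R} e) = v - \tau(v) e$. Well-definedness is immediate: if $v - v' = \beta e$, then $v - \tau(v) e$ and $v' - \tau(v') e$ coincide. The map is clearly linear, and it is bijective because $V = V_0 \oplus \mathbb{R} e$. The only point with genuine content is that $\psi$ is isometric, and this is exactly Proposition \ref{orderq}: the quotient norm is $\Vert v + \mathbb{R} e \Vert = \inf_{\alpha} \Vert v - \alpha e \Vert$, and Proposition \ref{orderq} says this infimum is attained at $\alpha = \tau(v)$ with value $\Vert v - \tau(v) e \Vert = \Vert \psi(v + \mathbb{R} e) \Vert$.

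For the second assertion I would first identify $V_0 \oplus_1 \mathbb{R}$ with the order unit space $(V_0^{(\cdot)}, e)$ of Theorem \ref{day}, whose cone is $V_0^{(\cdot)+} = \lbrace (w, \alpha) : \Vert w \Vert \le \alpha \rbrace$ and whose order unit norm is $\Vert (w, \alpha) \Vert_e = \Vert w \Vert + \vert \alpha \vert$. Then I would define $\Phi \colon V \to V_0^{(\cdot)}$ by $\Phi(v) = (v - \tau(v) e, \tau(v))$. It is a unital linear bijection: unitality follows from $\Phi(e) = (0, 1)$, and bijectivity from $V = V_0 \oplus \mathbb{R} e$ (the inverse being $(w, \alpha) \mapsto w + \alpha e$). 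That $\Phi$ is isometric is precisely Proposition \ref{spaceo}, since $\Vert \Phi(v) \Vert_e = \Vert v - \tau(v) e \Vert + \vert \tau(v) \vert = \Vert v \Vert$. Finally, $\Phi$ is an order isomorphism by Proposition \ref{dualo}(3): the condition $\Phi(v) \in V_0^{(\cdot)+}$ reads $\Vert v - \tau(v) e \Vert \le \tau(v)$, which is exactly the membership criterion for $V^+$.

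There is no serious obstacle here: all the analytic content has already been isolated in Propositions \ref{dualo}, \ref{orderq}, and \ref{spaceo}, and the theorem amounts to assembling these around the map $v \mapsto (v - \tau(v) e, \tau(v))$, exactly mirroring the proof of Theorem \ref{centralccs}. The only steps requiring a moment's care are the well-definedness of the quotient map $\psi$ and the explicit identification of $V_0 \oplus_1 \mathbb{R}$ with the Day construction $V_0^{(\cdot)}$; both are routine once the decomposition $V = V_0 \oplus \mathbb{R} e$ is in hand.
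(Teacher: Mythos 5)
Your proposal is correct and follows essentially the same route as the paper: both realize the isomorphisms through the map $v \mapsto (v - \tau(v)e, \tau(v))$, using Proposition \ref{orderq} to identify the quotient norm on $V/\mathbb{R}e$ with the norm on $V_0$, Proposition \ref{spaceo} for the isometry onto $V_0 \oplus_1 \mathbb{R}$, and Proposition \ref{dualo}(3) for the order isomorphism. If anything, your write-up is more explicit than the paper's (which cites ``Proposition \ref{spaceo}(1)'' and ``(2)'' where it evidently means Propositions \ref{dualo} and \ref{spaceo}), so no changes are needed.
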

\begin{proof}
	It follows from Lemma \ref{orderq} that $\Vert v \Vert = \Vert v + \mathbb{R} e \Vert$ for all $v \in V_0$. Thus $v \mapsto v + \mathbb{R} e$ determines a linear isometric mapping from $V_0$ into $V/{\mathbb{R} e}$. Further, $v - \tau(v)$ is a pull back of $X \in V/{\mathbb{R} e}$ whenever $X = v + \mathbb{R} e$. Thus $V_0$ is isometrically isomorphic to $V/{\mathbb{R} e}$. Next, with the help of Proposition \ref{spaceo}(2), we observe that $v \mapsto (v - \tau(v) e, \tau(v))$ determines a surjective linear isometry such that $e \mapsto (0, 1)$. Similarly, Proposition \ref{spaceo}(1) determines that this mapping is an order isomorphism.
\end{proof} 
\subsection{Strict convexity} 

\begin{definition}
	Let $B_0$ be a compact, balanced, convex subset of a real, locally convex, Hausdorff space $X$. We say that $B_0$ is said to satisfy \emph{Property (S)} with respect to $X^*$, if for each $x_0 \in Lead(B_0)$ there exists at most one $f \in X^*$ such that 
	$$\sup \lbrace \vert f(x) \vert: x \in B_0 \rbrace = f(x_0) = 1.$$
\end{definition} 
\begin{remark}
	The notion of Property (S) is motivated by a characterization of strictly convex spaces by M. G. Krein in 1938.  However, the same result was also proved independently by A. F. Ruston in 1949 \cite[Theorem 1]{R49}. (Please refer to \cite[5.5.1.1]{P07} for a detailed discussion on it.) 
\end{remark}

\begin{definition}\label{traciald}
	Let $(V, e)$ be an order unit space with a central trace $\tau \in S(V)$. We say that $V$ is \emph{tracial}, if $S(V)_0$ satisfies Property (S). 
\end{definition}
Theorem \ref{orderq1} can now be restated as follows. 
\begin{corollary}\label{tracialt}
	Let $V_0$ be a strictly convex real normed linear space. Then $V_0^{(\infty)}$ is precisely a tracial order unit space.
\end{corollary}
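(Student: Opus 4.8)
The plan is to read the statement as an ``if and only if'' characterization and to reduce it, via the structural decomposition of Theorem \ref{orderq1}, to the Krein--Ruston characterization of strict convexity flagged in the remark after the definition of Property (S). Concretely, writing $V_0^{(\cdot)}$ for the order unit space of Theorem \ref{day}, I must prove two things: that strict convexity of $V_0$ makes $V_0^{(\cdot)}$ a tracial order unit space, and conversely that every tracial order unit space is (by Theorem \ref{orderq1}) isometrically order isomorphic to $V_0^{(\cdot)}$ for a strictly convex $V_0$.

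I would begin by pinning down the central state and the ambient space for Property (S). For $V = V_0^{(\cdot)}$ the evaluation $\tau(w,\alpha) = \alpha$ is a central state: if $(w,\alpha) \in V_0^{(\cdot)+}$ then $\|w\| \le \alpha$, so $2\tau(w,\alpha)e - (w,\alpha) = (-w,\alpha) \in V_0^{(\cdot)+}$, giving $(w,\alpha) \le 2\tau(w,\alpha)e$, and Proposition \ref{dualo}(2) applies. By the description of $S(V)$ in the remark following Proposition \ref{spaceo}, the translated state space $S(V)_0 = S(V) - \tau$ equals the closed unit ball of $V_0^\ast := \{f \in V^\ast : f(e) = 0\}$. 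I take $X$ to be this subspace with its weak$^\ast$ topology; by Banach--Alaoglu together with Proposition \ref{orders}, the set $S(V)_0$ is then a weak$^\ast$-compact, balanced, convex subset of $X$ --- indeed its unit ball --- with $X^\ast = V_0$. By the remark following Theorem \ref{nls}, the lead points of $S(V)_0$ are the unit sphere of $V_0^\ast \cong (V_0)^\ast$, and each test functional $v \in X^\ast = V_0$ acts on $S(V)_0$ by $g \mapsto g(v)$, with $\sup\{|g(v)| : \|g\| \le 1\} = \|v\|$.

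With these identifications, Property (S) for $S(V)_0$ unwinds to the single assertion: \emph{for every norm-one $g_0 \in (V_0)^\ast$ there is at most one norm-one $v \in V_0$ with $g_0(v) = 1$}. This is precisely the Krein--Ruston condition, and both implications are short. If $V_0$ is strictly convex and $g_0(v_1) = g_0(v_2) = 1$ with $\|v_1\| = \|v_2\| = 1$, then $g_0$ takes the value $1$ at $\tfrac12(v_1+v_2)$, forcing $\|\tfrac12(v_1+v_2)\| = 1$ and hence $v_1 = v_2$; so Property (S) holds and $V_0^{(\cdot)}$ is tracial. Conversely, a tracial order unit space is presented by Theorem \ref{orderq1} as $V_0^{(\cdot)}$ with $V_0 = \ker\tau$; were $V_0$ not strictly convex there would be distinct unit vectors $v_1, v_2$ with $\|\tfrac12(v_1+v_2)\| = 1$, and a Hahn--Banach functional $g_0 \in (V_0)^\ast$ of norm one with $g_0(\tfrac12(v_1+v_2)) = 1$ would satisfy $g_0(v_1) = g_0(v_2) = 1$, contradicting Property (S).

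The one genuinely delicate point is the duality bookkeeping, not the Krein--Ruston step itself. Property (S) tests only against $X^\ast = V_0$, that is against $V_0 \subseteq (V_0)^{\ast\ast}$, rather than against all of $(V_0)^{\ast\ast}$; one must also be careful to take the ambient $X$ to be the subspace $\{f(e)=0\}$ rather than all of $V^\ast$, since otherwise the $\mathbb{R}e$-direction would introduce spurious test functionals that act identically on $S(V)_0$. The characterization nonetheless goes through because in the converse the Hahn--Banach witness $g_0$ already lies in $(V_0)^\ast$ and the two extreme points $v_1, v_2$ lie in $V_0$, so no bidual functionals are ever needed. I would flag this restricted supply of test functionals as the sole place where care is required.
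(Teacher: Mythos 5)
Your proof is correct and is essentially the argument the paper intends: the paper gives no written proof of this corollary, merely asserting that it restates Theorem \ref{orderq1}, and your chain --- verifying that $\tau(w,\alpha)=\alpha$ is a central state via Proposition \ref{dualo}, identifying $S(V)_0$ with the unit ball of $(V_0)^*$ inside $X=\lbrace f\in V^*: f(e)=0\rbrace$ using Propositions \ref{spaceo} and \ref{orders}, and then invoking the Krein--Ruston equivalence with strict convexity --- is exactly the intended route, with the details filled in. The point you flag about taking the ambient space for Property (S) to be $\lbrace f(e)=0\rbrace$ rather than all of $V^*$ (and reading the paper's $V_0^{(\infty)}$ as $V_0^{(\cdot)}$) is the correct resolution of what the paper leaves underspecified.
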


\section{Central absolutely base normed space}

In \cite{K21}, we described the absolute value that naturally arises out of the order structure in $V^{(\cdot)}$. We studied the corresponding absolute order unit space and showed that it is a generalization of a spin factor. In this section we shall discuss the description of the canonical absolute value in $V^{(\cdot)}$ in the context of the corresponding base $B^{(\cdot)}$.

Let us recall the notion of absolutely ordered spaces introduced in \cite{K18}. (See also, \cite{K16}.)
\begin{definition}\cite[Definition 3.4]{K18}\label{0d}
	Let $(U, U^+)$ be a real ordered vector space and let $\vert\cdot\vert: U \to U^+$ satisfy the following conditions:               
	\begin{enumerate}
		\item $\vert v \vert = v$ if $v \in U^+$;
		\item $\vert v \vert \pm v \in U^+$ for all $v \in U$;
		\item $\vert k v \vert = \vert k \vert \vert v \vert$ for all $v \in U$ and $k \in \mathbb{R}$; 
		\item If $u, v, w \in U$ with $\vert u - v \vert = u + v$ and $\vert u - w \vert = u + w,$ then $\vert u - \vert v \pm w \vert \vert = u + \vert v \pm w \vert$; 
		\item If $u, v$ and $w \in U$ with $\vert u - v \vert = u + v$ and $0 \le w \le v,$ then $\vert u - w \vert = u + w$.
	\end{enumerate}                 
	Then $(U, U^+, \vert\cdot\vert)$ is called an \emph{absolutely ordered space}. 
\end{definition}
Let $(V, B)$ be a base normed space with a centre $b_0$ of $B$. Consider the set $K = \lbrace u + b_0: e(u) = 0 ~ \textrm{and} ~ \Vert u \Vert = 1 \rbrace \subset B$. We note that if $k \in K$, then $k' \in K$. In fact, $k' = - u + b_0$ whenever $k = u + b_0$. The following results show that $K$ determines an absolute value in $V$. 
\begin{lemma}\label{b6}
	Let $(V, B)$ be a base normed space with a centre $b_0$ of $B$ and let $b \in B$ with $b \ne b_0$. Then there exist a unique $k \in K$ and a unique $\frac{1}{2} < \alpha \le 1$ such that $b = \alpha k + (1 - \alpha) k'$. 
\end{lemma}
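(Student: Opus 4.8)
The plan is to reduce the statement to the normed space $V_0 = \lbrace v \in V : e(v) = 0 \rbrace$ and then argue exactly as in Proposition \ref{17}(3). By Theorem \ref{centre}(iii) we have $B - b_0 = \lbrace u \in V_0 : \Vert u \Vert \le 1 \rbrace$, the closed unit ball of $V_0$, and $K$ is precisely the image under $u \mapsto u + b_0$ of its unit sphere $\lbrace u \in V_0 : \Vert u \Vert = 1 \rbrace$, which by the Remark following Theorem \ref{nls} is $Lead(B - b_0)$. Since the closed unit ball of any normed space is radially compact, Proposition \ref{17} applies verbatim; however, the base norm lets me write the argument directly, which I would do to keep the proof self-contained.

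First I would set $u = b - b_0$. By Theorem \ref{centre}(iii), $e(u) = 0$ and $\Vert u \Vert \le 1$, and since $b \ne b_0$ we have $u \ne 0$, so $0 < \Vert u \Vert \le 1$. For existence, put $k = \Vert u \Vert^{-1} u + b_0$; then $e(k - b_0) = 0$ and $\Vert k - b_0 \Vert = 1$, so $k \in K$ and $k' = -\Vert u \Vert^{-1} u + b_0$. Setting $\alpha = \frac{1 + \Vert u \Vert}{2}$, I would check that $\frac{1}{2} < \alpha \le 1$ and that
$$\alpha k + (1 - \alpha) k' = (2\alpha - 1)\Vert u \Vert^{-1} u + b_0 = \Vert u \Vert \cdot \Vert u \Vert^{-1} u + b_0 = u + b_0 = b.$$

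For uniqueness, suppose $b = \beta \ell + (1 - \beta) \ell'$ with $\ell \in K$ and $\frac{1}{2} < \beta \le 1$. Writing $\ell = w + b_0$ with $e(w) = 0$ and $\Vert w \Vert = 1$, so that $\ell' = -w + b_0$, I would subtract $b_0$ to obtain $u = (2\beta - 1) w$. Taking norms and using $2\beta - 1 > 0$ together with $\Vert w \Vert = 1$ forces $\Vert u \Vert = 2\beta - 1$, hence $\beta = \frac{1 + \Vert u \Vert}{2} = \alpha$, and then $w = \Vert u \Vert^{-1} u$, so $\ell = k$.

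There is no serious obstacle here: once Theorem \ref{centre} identifies $B - b_0$ with the unit ball of $V_0$, the normalization of $u$ and the affine bookkeeping are forced. The only points requiring care are confirming $\alpha \le 1$, which rests on the bound $\Vert u \Vert \le 1$ supplied by Lemma \ref{b1}, and observing that the hypothesis $\beta > \frac{1}{2}$ is exactly what guarantees $2\beta - 1 > 0$, so that the normalization in the uniqueness step is unambiguous.
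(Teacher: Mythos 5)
Your proof is correct and is essentially the same as the paper's: both normalize $u = b - b_0$ to obtain $k = \Vert u\Vert^{-1}u + b_0 \in K$, set $\alpha = \frac{1+\Vert u\Vert}{2}$, and derive uniqueness from $u = (2\beta-1)w$ with $\Vert w\Vert = 1$ and $2\beta - 1 > 0$. The preliminary reduction to Proposition \ref{17}(3) is a nice observation but, as you note, the direct argument is what both you and the paper actually carry out.
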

\begin{proof}
	As $b \in B$ with $b \ne b_0$, we have $0 < \Vert b - b_0 \Vert \le 1$. Put $\alpha = \frac{1 + \Vert b - b_0 \Vert}{2}$. Then $\frac{1}{2} < \alpha \le 1$. Put $u_0 = \frac{b - b_0}{\Vert b - b_0 \Vert}$. Then $e(u_0) = 0$ with $\Vert u_0 \Vert = 1$. Thus $k = u_0 + b_0 \in K$ with $k' = - u_0 + b_0$ and we have 
	$$\alpha k + (1 - \alpha) k' = (2 \alpha - 1) u_0 + b_0 = \Vert b - b_0 \Vert \left( \frac{b - b_0}{\Vert b - b_0 \Vert} \right) + b_0 = b.$$ 
	Next, let $b = \beta k_1 + (1 - \beta) k'_1$ for some $k_1 \in K$ and $\frac{1}{2} < \beta \le 1$. Then there exists $u_1 \in V$ with $e(u_1) = 0$ and $\Vert u_1 \Vert = 1$ such that $k_1 = u_1 + b_0$ and $k'_1 = - u_1 + b_0$. Then as above $b = (2 \beta - 1) u_1 + b_0$ so that $(2 \alpha - 1) u_0 = (2 \beta - 1) u_1$. Since $\frac{1}{2} < \alpha \le 1$ and $\frac{1}{2} < \beta \le 1$, we have $2 \alpha > 1$ and $2 \beta > 1$. Now, as $\Vert u_0 \Vert = 1 = \Vert u_1 \Vert$, we deduce that $2 \alpha - 1 = 2 \beta - 1$. Thus we have $\alpha = \beta$ so that $u_0 = u_1$,
\end{proof}
\begin{theorem}\label{b7}
	Let $(V, B)$ be a base normed space with a centre $b_0$ of $B$ and let $v \in V \setminus \mathbb{R} b_0$. Then there exist $k \in K$ and $\alpha, \beta \in \mathbb{R}$ such that $v = \alpha k + \beta k'$. This representation is unique in the following sense: if $v = \lambda k_1 + \mu k_1'$ for some $k_1 \in K$ and $\lambda, \mu \in \mathbb{R}$, then either $k_1 = k$ with $\lambda = \alpha$ and $\mu = \beta$ or $k_1 = k'$ with $\lambda = \beta$ and $\mu = \alpha$.
\end{theorem}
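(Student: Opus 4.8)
The plan is to reduce the whole statement to a pair of scalar equations that the coefficients must satisfy, exploiting the description $k = u + b_0$, $k' = -u + b_0$ of the elements of $K$. First I would record the identities $k + k' = 2 b_0$ and $k - k' = 2u$, so that for any $k = u + b_0 \in K$ and any scalars $\lambda, \mu$ one has $\lambda k + \mu k' = (\lambda - \mu) u + (\lambda + \mu) b_0$. Applying $e$ and using $e(u) = 0$ together with $e(b_0) = 1$ (which holds because $b_0 \in B$), every representation $v = \lambda k + \mu k'$ forces $\lambda + \mu = e(v)$ and $(\lambda - \mu) u = v - e(v) b_0$.

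For existence I would first observe that $v \notin \mathbb{R} b_0$ is exactly the condition $v - e(v) b_0 \ne 0$: indeed, $v - e(v) b_0 = 0$ would give $v = e(v) b_0 \in \mathbb{R} b_0$. Hence $\Vert v - e(v) b_0 \Vert > 0$, and I set $u = \Vert v - e(v) b_0 \Vert^{-1}(v - e(v) b_0)$, which satisfies $e(u) = 0$ and $\Vert u \Vert = 1$; by Theorem \ref{centre} both $k := u + b_0$ and $k' = -u + b_0$ then lie in $K \subset B$. Solving the two scalar equations above yields $\alpha = \tfrac{1}{2}\bigl(e(v) + \Vert v - e(v) b_0 \Vert\bigr)$ and $\beta = \tfrac{1}{2}\bigl(e(v) - \Vert v - e(v) b_0 \Vert\bigr)$, and a direct substitution confirms $v = \alpha k + \beta k'$. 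Note also that $\alpha - \beta = \Vert v - e(v) b_0 \Vert > 0$, so $\alpha \ne \beta$ and $k \ne k'$.

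For uniqueness, suppose $v = \lambda k_1 + \mu k_1'$ with $k_1 = u_1 + b_0 \in K$. By the observation above, $\lambda + \mu = e(v)$ and $(\lambda - \mu) u_1 = v - e(v) b_0 = \Vert v - e(v) b_0 \Vert\, u$. Since the right-hand side is nonzero, $\lambda - \mu \ne 0$ and $u_1 = \Vert v - e(v) b_0 \Vert (\lambda - \mu)^{-1} u$; taking norms and using $\Vert u_1 \Vert = \Vert u \Vert = 1$ gives $|\lambda - \mu| = \Vert v - e(v) b_0 \Vert = \alpha - \beta$. The two possible signs produce the two alternatives: if $\lambda - \mu = \alpha - \beta$ then $u_1 = u$, so $k_1 = k$ and, combined with $\lambda + \mu = \alpha + \beta$, we get $\lambda = \alpha$, $\mu = \beta$; if $\lambda - \mu = -(\alpha - \beta)$ then $u_1 = -u$, so $k_1 = k'$ and $k_1' = k$, and solving gives $\lambda = \beta$, $\mu = \alpha$.

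The argument is essentially bookkeeping once the correct normalization of $u$ is fixed, and I do not expect a serious obstacle. The one point requiring care is that $v \notin \mathbb{R} b_0$ is precisely what guarantees $v - e(v) b_0 \ne 0$, which in turn is what makes the direction $u$, and hence the pair $k, k'$, well defined; without it the direction would be indeterminate. The only genuinely external input is Theorem \ref{centre}, used to certify that $\pm u + b_0 \in K$; the norm formula of Corollary \ref{b4} is not strictly required for the existence or uniqueness, though it is what identifies $\alpha$ and $\beta$ as the natural coefficients.
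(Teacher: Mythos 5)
Your proof is correct and follows essentially the same route as the paper's: normalize $u = \Vert v - e(v)b_0\Vert^{-1}(v - e(v)b_0)$, set $2\alpha = e(v) + \Vert v - e(v)b_0\Vert$, $2\beta = e(v) - \Vert v - e(v)b_0\Vert$, and derive uniqueness from $\lambda + \mu = e(v)$ and $(\lambda-\mu)u_1 = v - e(v)b_0$. The only difference is that you observe this formula works uniformly for all $v \notin \mathbb{R}b_0$, whereas the paper first splits off the cases $v \in \pm V^+$ and handles them via Lemma \ref{b6}; your unified treatment is a harmless (indeed slightly cleaner) streamlining, not a different argument.
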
 
\begin{proof}
	First consider $v \in V^+ \setminus \mathbb{R} b_0$. Then $v = \Vert v \Vert b$ for $b := \Vert v \Vert^{-1} v \in B$ such that $b \ne b_0$. Thus by Lemma \ref{b6}, there exists a unique $k \in K$ and $\frac{1}{2} < \alpha_0 \le 1$ such that $b = \alpha_0 k + (1 - \alpha_0) k'$. Therefore, $v = \alpha k + \beta k'$ where $\alpha = \Vert v \Vert \alpha_0$ and $\beta = \Vert v \Vert (1 - \alpha_0)$. A similar proof works for $v \in - V^+ \setminus \mathbb{R} b_0$.
	
	Now, let $v \notin V^+ \cup - V^+$. Then $\vert e(v) \vert < \Vert v - e(v) b_0 \Vert$. Put $u = \frac{v - e(v) b_0}{\Vert v - e(v) b_0 \Vert}$. Then $e(u) = 0$ and $\Vert u \Vert = 1$ so that $k = u + b_0 \in K$ with $k' = - u + b_0$. First, we assume that $e(v) \ge 0$. Put $2 \alpha = e(v) + \Vert v - e(v) b_0 \Vert$ and $2 \beta = e(v) - \Vert v - e(v) b_0 \Vert$, we have $\alpha > 0$, $\beta < 0$ with $\alpha - \beta = \Vert v - e(v) b_0 \Vert$ and $\alpha + \beta = e(v)$. Thus 
	$$\alpha k + \beta k' = (\alpha - \beta) u + (\alpha + \beta) b_0 = v - e(v) b_0 + e(v) b_0 = v.$$ 
	
	Next, let $v = \lambda k_1 + \mu k'_1$ for some $k_1 \in K$ and $\lambda, \mu \in \mathbb{R}$. Then $e(v) = \lambda + \mu$. Let $k_1 = u_1 + b_0$ for some $u_1 \in V$ with $e(u_1) = 0$ and $\Vert u_1 \Vert = 1$. Then $k' = - u_1 + b_0$ and we have $v = (\lambda - \mu) u_1 + e(v) b_0$. Replacing $u_1$ by $- u_1$, if required, we get that $\lambda \ge \mu$. Thus $\Vert v - e(v) b_0 \Vert = \lambda - \mu$. Now it follows that $\lambda = \alpha$ and $\mu = \beta$ so that $u_1 = u_0$ and consequently, $k = k_1$. 
	
	When $e(v) < 0$, we interchange $\alpha$ and $\beta$ and replace $k$ by $k'$.
\end{proof}

\begin{remark}
	Let us call the unique decomposition $v = \alpha k + \beta k'$ with $k \in K$ and $\vert \alpha \vert \ge \vert \beta \vert$ as the \emph{$K$-decomposition} of $v \in V \setminus \mathbb{R} b_0$. Note that for $v = \lambda b_0$, we have $v = \alpha (k + k')$ for all $k \in K$ where $2 \alpha = \lambda$. Thus, given $v \in V$ and a decomposition $v = \alpha k + \beta k'$ with $\vert \alpha \vert \ge \vert \beta \vert$, we obtain $\vert v \vert = \vert \alpha \vert k + \vert \beta \vert k' \in V^+$ which is uniquely determined by $v$. This defines a mapping $\vert\cdot\vert: V \to V^+$. 
\end{remark}
\begin{proposition}\label{abs}
	Let $(V, B)$ be a base normed space with a centre $b_0$ of $B$. Then the mapping $\vert\cdot\vert: V \to V^+$ satisfies the following properties:
	\begin{enumerate}
		\item $\vert v \vert = v$ if $v \in V^+$;
		\item $\vert v \vert \pm v \in V^+$ for all $v \in V$;
		\item $\vert \lambda v \vert = \vert \lambda \vert \vert v \vert$ for all $v \in V$ and $\lambda \in \mathbb{R}$; 
		\item If $u, v, w \in V$ with $\vert u - v \vert = u + v$ and $\vert u - w \vert = u + w$, then $\vert u - \vert v \pm w \vert \vert = u + \vert v \pm w \vert$. 
	\end{enumerate}   
\end{proposition}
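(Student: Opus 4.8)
The plan is to establish the four properties in turn, the essential tool being the $K$-decomposition of Theorem \ref{b7}. A preliminary observation streamlines everything: for a fixed complementary pair $k, k' \in K$, whenever $v$ is written as $v = \alpha k + \beta k'$ in this pair one has $\vert v \vert = \vert \alpha \vert k + \vert \beta \vert k'$, irrespective of which coefficient is the dominant one. Indeed, the requirement $\vert \alpha \vert \ge \vert \beta \vert$ in the defining Remark only fixes a labelling, and relabelling $k \leftrightarrow k'$ (using $(k')' = k$) leaves $\vert \alpha \vert k + \vert \beta \vert k'$ unchanged. With this in hand, properties (1)--(3) are routine. For (1), if $v \in V^+ \setminus \mathbb{R} b_0$ the first paragraph of the proof of Theorem \ref{b7} gives $v = \alpha k + \beta k'$ with $\alpha, \beta \ge 0$, whence $\vert v \vert = \alpha k + \beta k' = v$, while $v \in \mathbb{R} b_0 \cap V^+$ is handled by $\vert \lambda b_0 \vert = \vert \lambda \vert b_0$. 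For (2), writing $v = \alpha k + \beta k'$ gives $\vert v \vert \pm v = (\vert \alpha \vert \pm \alpha) k + (\vert \beta \vert \pm \beta) k'$, a non-negative combination of $k, k' \in V^+$, hence in $V^+$. For (3), scaling a $K$-decomposition by $\lambda$ multiplies both coefficients by $\lambda$ and preserves their relative sizes, so $\vert \lambda v \vert = \vert \lambda \vert \vert v \vert$; the elements of $\mathbb{R} b_0$ are again immediate.

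The crux is property (4), and the main obstacle is to decode the hypothesis $\vert u - v \vert = u + v$. I would isolate this as a characterization lemma: \emph{for $u, v \in V$ one has $\vert u - v \vert = u + v$ if and only if $u, v \in V^+$ and, when both are nonzero, there exist $k \in K$ and scalars $\lambda, \mu > 0$ with $u = \lambda k$ and $v = \mu k'$.} For the forward direction, note first that $\vert u - v \vert = u + v \in V^+$, and that property (2) applied to $u - v$ gives $(u + v) \pm (u - v) \in V^+$, i.e. $2u, 2v \in V^+$, so $u, v \in V^+$. Assuming both nonzero, the cases $u - v \in V^+$ and $u - v \in -V^+$ force $v = 0$ and $u = 0$ respectively; hence $u - v \notin V^+ \cup -V^+$, and Theorem \ref{b7} supplies $u - v = \alpha k + \beta k'$ with $\vert u - v \vert = \vert \alpha \vert k + \vert \beta \vert k'$. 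Combining with $u + v = \vert \alpha \vert k + \vert \beta \vert k'$ and adding/subtracting yields $2u = (\vert \alpha \vert + \alpha) k + (\vert \beta \vert + \beta) k'$ and $2v = (\vert \alpha \vert - \alpha) k + (\vert \beta \vert - \beta) k'$. Since $\alpha, \beta$ cannot share a sign and $\beta \ne 0$ (in either exceptional case $u - v$ would lie in $V^+ \cup -V^+$), a short sign analysis gives $u = \lambda k$, $v = \mu k'$ with $\lambda, \mu > 0$, up to interchanging $k$ and $k'$. The converse is immediate: if $u = \lambda k$, $v = \mu k'$ with $\lambda, \mu > 0$, then $u - v = \lambda k + (-\mu) k'$ is already expressed in the pair $\{k, k'\}$, so by the preliminary observation $\vert u - v \vert = \lambda k + \mu k' = u + v$.

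Granting the lemma, property (4) falls out by case analysis. If $u = 0$, the hypotheses give $v, w \in V^+$ (using property (3) to rewrite $\vert -v \vert = \vert v \vert$), and the conclusion reduces to $\vert \, \vert v \pm w \vert \, \vert = \vert v \pm w \vert$, which is property (1). If $u \ne 0$ but $v = 0$ (resp. $w = 0$), then $\vert v \pm w \vert = w$ (resp. $v$) and the assertion is exactly the hypothesis $\vert u - w \vert = u + w$ (resp. $\vert u - v \vert = u + v$). In the remaining case, with $u, v, w$ all nonzero, the lemma gives $u = \lambda k = \lambda' \ell$, $v = \mu k'$, $w = \mu' \ell'$ for $k, \ell \in K$; applying $e$ and using $e(k) = e(\ell) = 1$ forces $\lambda = \lambda' = e(u)$, hence $k = \ell$. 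Thus $v$ and $w$ are positive multiples of the same $k'$, so $v \pm w = (\mu \pm \mu') k'$ and, by properties (1) and (3), $\vert v \pm w \vert = \vert \mu \pm \mu' \vert k'$, a non-negative multiple of $k'$. Applying the converse half of the lemma to the pair $u = \lambda k$ and $\vert v \pm w \vert \in \mathbb{R}^+ k'$ (or property (1) directly when $\vert v \pm w \vert = 0$) yields $\vert u - \vert v \pm w \vert \, \vert = u + \vert v \pm w \vert$, as required. The only genuinely delicate step is the sign analysis inside the characterization lemma; the rest is bookkeeping with the $K$-decomposition and the three properties already proved.
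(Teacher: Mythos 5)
Your proof is correct and follows essentially the same route as the paper: properties (1)--(3) are read off from the $K$-decomposition, and property (4) rests on the observation that $\vert u - v \vert = u + v$ forces $u = \lambda k$ and $v = \mu k'$ for a complementary pair $k, k' \in K$ with $\lambda, \mu > 0$ --- exactly the characterization the paper records as a remark immediately after its proof. Your write-up is in fact somewhat more complete than the paper's, since you justify $u, v, w \in V^+$ via property (2), carry out the sign analysis explicitly, and treat the degenerate cases ($u = 0$, $v = 0$, $w = 0$, $\vert v \pm w \vert = 0$) that the paper leaves implicit.
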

\begin{proof}
	Let $v \in V$ and consider its $K$-decomposition $v = \alpha k + \beta k'$ where $k \in K$ and $\vert \alpha \vert \ge \vert \beta \vert$. Then $\vert v \vert := \vert \alpha \vert k + \vert \beta \vert k'$. As $k, k' \in V^+$, verification of (1), (2) and (3) is straightforward. To prove (4), let $u, v, w \in V$ with $\vert u - v \vert = u + v$ and $\vert u - w \vert = u + w$. Then $u, v, w \in V^+$. If $u - v \in V^+$, we have $u + v = \vert u - v \vert = u - v$ so that $v = 0$. Thus 
	$$\vert u - \vert v \pm w \vert = \vert u - w \vert= u + w =  u + \vert v \pm w \vert.$$ 
	So we may assume that $u - v, u - w \notin V^+ \bigcup - V^+$. Then $u \ne 0$, $v \ne 0$ and $w \ne 0$. Consider the (unique) $K$-decomposition $u - v = \lambda_1 k_1 + \mu_1 k_1'$ for some $k_1 \in K$ and $\vert \lambda_1 \ge \vert \mu_1 \vert$. Then $\vert u - v \vert = \vert \lambda_1 \vert k_1 + \vert \mu_1 \vert k_1'$. Since $\vert u - v \vert = u + v$, we get that $u = \lambda_1 k_1$ and $v = - \mu_1 k_1'$. Thus $\lambda_1 > 0$ and $\mu_1 < 0$. Again, as $\vert u - w \vert = u + w$, we get that $u = \lambda_2 k_2$ and $w = - \mu_2 k_2'$ for some $k_2 \in K$ and $\vert \lambda_2 \ge \vert \mu_2 \vert$ with $\lambda_2 > 0$ and $\mu_2 < 0$. Thus $k_1 = k_2$ and $\lambda_1 = \lambda_2$. Therefore, 
	$$\vert u - \vert v \pm w \vert \vert = \vert \lambda_1 k_1 - \vert \mu_1 \pm \mu_2 \vert k_1' \vert = \lambda_1 k_1 + \vert \mu_1 \pm \mu_2 \vert k_1' = u + \vert v \pm w \vert.$$
\end{proof} 
\begin{remark}
	It follows from the proof of Proposition \ref{abs} that $\vert u - v \vert = u + v$ if and only if there exist $k \in K$ and positive real numbers $\alpha$ and $\beta$ such that $u = \alpha k$ and $v = \beta k'$.
\end{remark} 
\begin{theorem}\label{bstrict} 
	Let $(V, B)$ be a base normed space with a centre $b_0$ of $B$. Consider 
	$$V_0 = \lbrace v \in V: e(v) = 0 \rbrace = \lbrace v - e(v) b_0: v \in V \rbrace.$$
	Then $V_0$ is closed subspace of $V$ and the following statements are equivalent:
	\begin{enumerate}
		\item $K = ext(B)$;
		\item $V_0$ is strictly convex; and 
		\item For all $u, v, w \in V^+$ with $\vert u - v \vert = u + v$ and $0 \le w \le v$ we have $\vert u - w \vert = u + w$.
	\end{enumerate}
\end{theorem}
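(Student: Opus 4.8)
The plan is to dispose of the closedness of $V_0$ immediately and then to prove the equivalences in the shape $(1) \Leftrightarrow (2)$ and $(2) \Leftrightarrow (3)$, the latter being the substantive part. Since $e$ is the continuous order unit of $V^*$ determining the base $B$, the set $V_0 = \ker e$ is a closed hyperplane; and because $e(b_0) = 1$, every $v \in V$ satisfies $e(v - e(v)b_0) = 0$, which gives the stated description $V_0 = \lbrace v - e(v)b_0 : v \in V \rbrace$.

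For $(1) \Leftrightarrow (2)$ I would pass through the isometric isomorphism of Theorem \ref{0b}, under which $(V, B)$ is identified with $(V_0^{(\cdot)}, B_0^{(\cdot)})$ via the natural map $v \mapsto (v - e(v)b_0, e(v))$. Under this identification $B$ corresponds to $\lbrace (x, 1) : \Vert x \Vert \le 1 \rbrace$ and $K$ to $\lbrace (x, 1) : \Vert x \Vert = 1 \rbrace$. The projection $(x, 1) \mapsto x$ is an affine bijection of $B_0^{(\cdot)}$ onto the closed unit ball of $V_0$, so it carries extreme points to extreme points; hence the extreme points of $B$ correspond exactly to the extreme points of the closed unit ball of $V_0$. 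As extreme points of a unit ball always lie on the unit sphere, the equation $K = ext(B)$ says precisely that every unit vector of $V_0$ is an extreme point of the ball, which is the definition of strict convexity.

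For $(2) \Leftrightarrow (3)$ the key remark is that condition (3) is exactly the remaining axiom, condition (5), of an absolutely ordered space in Definition \ref{0d}, the first four of which are already supplied by Proposition \ref{abs}. Using the characterization recorded in the remark following Proposition \ref{abs} --- namely that $\vert u - v \vert = u + v$ holds iff $u = \alpha k$ and $v = \beta k'$ for some $k \in K$ and $\alpha, \beta > 0$ --- condition (3) reduces, after dealing with the trivial cases $u = 0$ and $v = 0$, to the single implication: if $0 \le w \le \beta k'$ then $w \in \mathbb{R}^+ k'$. Passing to the coordinates of Theorem \ref{0b} with $k' \cong (-y, 1)$ and $\Vert y \Vert = 1$, this asserts that every $w = (z, s)$ with $\Vert z \Vert \le s$ and $\Vert \beta y + z \Vert \le \beta - s$ must satisfy $z = -\gamma y$ and $s = \gamma$ for some $\gamma \ge 0$.

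The heart of the matter is then a triangle-inequality equality argument. Adding the two defining inequalities for $w$ forces $\Vert z \Vert + \Vert \beta y + z \Vert = \beta = \Vert \beta y \Vert$, so that $-z$ and $\beta y + z$ realize equality in the triangle inequality; in a strictly convex $V_0$ this compels them to be nonnegative multiples of one another, giving $z = -\gamma y$, and chasing the two inequalities back then pins down $s = \gamma$. This yields $(2) \Rightarrow (3)$. For the converse I would argue contrapositively: if $V_0$ is not strictly convex there are unit vectors $a \ne b$ with $\Vert a + b \Vert = 2$, and these are necessarily linearly independent; setting $y = \tfrac{1}{2}(a + b)$, so that $\Vert y \Vert = 1$, and taking $w \cong (-\tfrac{1}{2}a, \tfrac{1}{2})$, one checks $0 \le w \le k'$ while $-\tfrac{1}{2}a$ is not a scalar multiple of $y$, so $w \notin \mathbb{R}^+ k'$ and axiom (3) fails with $u = k$ and $v = k'$. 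The main obstacle I anticipate is the bookkeeping: faithfully transporting the order interval $0 \le w \le v$ through the isomorphism, isolating the triangle-inequality equality case, and handling the degenerate configurations where one of $u, v, w$ vanishes or where $u - v$ is comparable to $0$.
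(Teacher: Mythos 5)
Your proof is correct, and it rests on the same two mechanisms as the paper's --- the identification of $B$ with the translated unit ball of $V_0$, and a triangle-inequality equality combined with strict convexity --- but it is organized differently, and the reorganization buys some economy. The paper proves the cycle $(1)\Rightarrow(2)\Rightarrow(3)\Rightarrow(1)$; you get $(1)\Leftrightarrow(2)$ in one stroke by noting that the affine bijection $b\mapsto b-b_0$ carries $ext(B)$ onto the extreme points of the unit ball of $V_0$ and $K$ onto the unit sphere, whereas the paper only proves $(1)\Rightarrow(2)$ directly and recovers the converse around the cycle. In $(2)\Rightarrow(3)$ the paper first writes $w=\lambda b$ and expands $b$ via Lemma \ref{b6} as $\gamma k_1+(1-\gamma)k_1'$, then runs a reverse-triangle-inequality chain to force $\gamma=1$ before invoking strict convexity; you bypass Lemma \ref{b6} by adding the two positivity constraints $\Vert z\Vert\le s$ and $\Vert\beta y+z\Vert\le\beta-s$ to land directly on $\Vert -z\Vert+\Vert\beta y+z\Vert=\Vert\beta y\Vert$, which is the same equality with lighter bookkeeping. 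For the last implication the two counterexamples are mirror images: the paper violates $(3)$ starting from a non-extreme point of $K$ (proving $(3)\Rightarrow(1)$), while you violate it starting from a failure of strict convexity (proving $(3)\Rightarrow(2)$); given your $(1)\Leftrightarrow(2)$ these are interchangeable, and your witness $w=(-\tfrac12 a,\tfrac12)$ does satisfy $0\le w\le k'$ while failing to be a multiple of $k'$ exactly because $a\ne b$. Two details to make explicit in a write-up: when reducing $(3)$ to ``$0\le w\le\beta k'$ implies $w\in\mathbb{R}^+k'$'' you should say that the witness $k_2$ in the remark after Proposition \ref{abs} must equal $k$ by uniqueness of the base decomposition of $u=\alpha k$; and in the triangle-equality step you should dispose separately of the degenerate subcases $z=0$ and $\beta y+z=0$, where strict convexity does not apply but the conclusion $w=0$ or $w=\beta k'$ is immediate.
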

\begin{proof}
	(1) implies (2): First we assume that $K = ext(B)$. Let $u_0, u_1 \in V_0$ be such that $u_0 \ne u_1$ and $\Vert u_0 \Vert = 1 = \Vert u_1 \Vert$ and assume to the contrary that $\Vert u_{\alpha} \Vert = 1$ where $u_{\alpha} = \alpha u_1 + (1 - \alpha) u_0$ for $0 < \alpha < 1$. Put $k_0 = u_0 + b_0$, $k_1 = u_1 + b_0$ and $k_{\alpha} = u_{\alpha} + b_0$. Then, by construction, $k_0, k_1, k_{\alpha} \in K$ and we have $k_{\alpha} = \alpha k_1 + (1 - \alpha) k_0$. Since $K = Ext(B)$, we deduce that $k_0 = k_1 = k_{\alpha}$. This leads to a contradiction $u_0 = u_1$. Thus $V_0$ must be strictly convex. 
	
	(2) implies (3): Next we assume that $V_0$ is strictly convex. Consider $u, v, w \in V^+$ with $\vert u - v \vert = u + v$ and $0 \le w \le v$. If $u - v \in V^+$, then $v = 0$ so that $w = 0$ and we have $\vert u - w \vert = u + w$. If $v - u \in V^+$, then $u = 0$ so that $\vert u - w \vert = w = u + w$. Thus we may assume that $u - v \notin V^+ \bigcup - V^+$. Then $u \ne 0$ and $v \ne 0$. We may also assume that $w \ne 0$. Since $\vert u - v \vert = u + v$, following the proof of Proposition \ref{abs}, we can find $k \in K$ and $\alpha \ge \beta > 0$ such that $u = \alpha k$ and $v = \beta k'$. Since $B$ is a base for $V^+$ and since $w \in V^+$ with $w \ne 0$, there exist a unique $b \in B$ and $\lambda > 0$ such that $w = \lambda b$. By Lemma \ref{b6}, we have $b = \gamma k_1 + (1 - \gamma) k_1'$ for some $k_1 \in K$ and $\frac 12 \le \gamma \le 1$. Since $k, k_1 \in K$, there exist $x, x_1 \in V_0$ with $\Vert x \Vert = 1 = \Vert x_1 \Vert$ such that $k = x + b_0$ and $k_1 = x_1 + b_0$. Then $k' = - x + b_0$ and $k_1 = - x_1 + b_0$. As $w \le v$ we have 
	$$\lambda \left( \gamma (x_1 + b_0) + (1 - \gamma) (- x_1 + b_0) \right) \le \beta (- x + b_0)$$ 
	so that $- \left( \beta x + \lambda (2 \gamma - 1) x_1 \right) + (\beta - \lambda) b_0 \in V^+$. Now, as $\frac 12 \le \gamma \le 1$ we have $0 \le 2 \gamma - 1 \le 1$. Thus by Theorem \ref{centre}, we get 
	$$\beta - \lambda \le \beta - (2 \gamma - 1) \lambda = \Vert \beta x \Vert - \Vert (2 \gamma - 1) \lambda x_1 \Vert \le \Vert \beta x + \lambda (2 \gamma - 1) x_1 \Vert \le \beta - \lambda.$$ 
	Therefore, we have $\gamma = 1$ and $\Vert \beta x + \lambda x_1 \Vert = \beta - \lambda$. It follows that $b = k_1$ so that $w = \lambda k_1$. Since $V_0$ is strictly convex, we conclude that 
	$$\Vert \beta x + \lambda x_1 \Vert^{-1} (\beta x + \lambda x_1) = \Vert \lambda x_1 \Vert^{-1} (- \lambda x_1),$$ 
	or equivalently, $\beta x + \lambda x_1 = - (\beta - \lambda) x_1$ as $\Vert x \Vert = \Vert x_1 \Vert$. Thus $x_1 = - x$ so that $k_1 = k'$ and $w = \lambda k'$. Now 
	$$\vert u - w \vert = \vert \alpha k - \lambda k' \vert \alpha k + \lambda k' = u + w.$$
	
	(3) implies (1): Finally, we assume that (3) holds. We show that $K = ext(B)$. Let $b \in B$ such that $b \notin K$. Then there exists $x \in V_0$ with $\Vert x \Vert < 1$ such that $b = x + b_0$. As $b_0 = \frac 12 k + \frac 12 k' \notin ext(B)$,  without any loss of generality we may assume that $x \ne 0$. Put $x_1 = \Vert x \Vert^{-1} x$. Then $b_1 := x_1 + b_0 \in B$ and we have $x = \Vert x \Vert b_1 + (1 - \Vert x \Vert) b_0$ is a proper convex combination in $B$. Thus $b \notin ext(B)$. Therefore, $ext(B) \subset K$. 
	
	Conversely, let $k \in K$ and assume that $k \notin ext(B)$. Then there are $b, c \in B$ and $\delta \in (0, 1)$ such that $k = \delta b + (1 - \delta) c$. Let $x_0, y, z \in V_0$ with $\Vert x_0 \Vert = 1$, $\Vert y \Vert \le 1$ and $\Vert z \Vert \le 1$ such that $k = x_0 + b_0$, $b = y + b_0$ and $c = z + b_0$. Then $x_0 = \delta y + (1 - \delta) z$. Now 
	$$1 = \Vert x_0 \Vert = \Vert \delta y + (1 - \delta) z \Vert \le \delta \Vert y \Vert + (1 - \delta) \Vert z \Vert \le \delta + (1 - \delta) = 1$$ 
	so that $\Vert y \Vert = 1 = \Vert z \Vert$. Put $u = - x_0 + b_0$, $v = x_0 + b_0$ and $w = \delta (y + b_0)$. Then $u, v, w \in V^+ \setminus \lbrace 0 \rbrace$ with $u \in K$ and $v = u'$. Thus $\vert u - v \vert = u + v$. Also $v - w = (1 - \delta) (z + b_0) \ge 0$ but $\vert u - w \vert \ne u + w$ for $w \ne \gamma v'$ for any positive real number $\gamma$ which contradicts the assumption that the statement (3) holds. Thus $K \subset ext(B)$. This proves (1).
\end{proof}
\begin{definition}
	Let $(V, B)$ be a base normed space with a centre $b_0$ of $B$ and assume that $K := \lbrace k \in B: \Vert k - b_0 \Vert = 1 \rbrace = ext(B)$. Then $(V, B, b_0)$ is said to be a \emph{central} base normed space. 
\end{definition}
Now Theorem \ref{bstrict} yields in the following result.
\begin{corollary}\label{strictb1}
	Let $(V, B, b_0)$ be a central base normed spaces so that $V_0 = \lbrace v \in V: e(v) = 0 \rbrace$ is strictly convex and that $V$ is isometrically order isomorphic to $V_0^{(1)}$. 
\end{corollary}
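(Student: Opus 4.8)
The plan is to recognize that Corollary \ref{strictb1} is a direct assembly of two previously established results, so the real work lies in matching hypotheses rather than in any new estimate. First I would unwind the standing hypothesis: by definition a \emph{central} base normed space $(V, B, b_0)$ is a base normed space whose base $B$ has centre $b_0$ and which satisfies $K = ext(B)$, where $K = \lbrace k \in B : \Vert k - b_0 \Vert = 1 \rbrace$. I would check that this set coincides with the set $K = \lbrace u + b_0 : e(u) = 0, \ \Vert u \Vert = 1 \rbrace$ used throughout Section 6. This is immediate from Theorem \ref{centre}(iii): every $b \in B$ has the form $b_0 + u$ with $e(u) = 0$ and $\Vert u \Vert \le 1$, and $\Vert b - b_0 \Vert = 1$ precisely when $\Vert u \Vert = 1$. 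Hence the defining condition $K = ext(B)$ is literally statement (1) of Theorem \ref{bstrict}.

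With that identification in hand, the first conclusion is immediate: applying the implication (1) $\Rightarrow$ (2) of Theorem \ref{bstrict} shows that $V_0 = \lbrace v \in V : e(v) = 0 \rbrace$ is strictly convex. That $V_0$ is a closed subspace is already recorded in the statement of Theorem \ref{bstrict}, so nothing further is needed there. For the second conclusion I would invoke Theorem \ref{0b}, which asserts that a base normed space with a centre is isometrically isomorphic to the base normed space $(V_0^{(\cdot)}, B_0^{(\cdot)})$ produced by the adjoining construction of Proposition \ref{adjbase} (the base norm formula underlying this being Corollary \ref{b4}, namely $\Vert v \Vert = \max \lbrace \Vert v - e(v) b_0 \Vert, \vert e(v) \vert \rbrace$, which realizes $V$ as $V_0 \oplus_{\infty} \mathbb{R}$). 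Since the isomorphism of Theorem \ref{0b} carries the base $B$ onto $B_0^{(\cdot)}$, and since in any base normed space the positive cone is exactly the cone generated by the base, this map automatically preserves positivity in both directions; thus the isometric isomorphism is in fact an isometric order isomorphism onto $V_0^{(1)}$, completing the proof.

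I would not expect any genuine obstacle here, as the corollary is assembled entirely from Theorems \ref{bstrict} and \ref{0b}. The only points requiring care are bookkeeping ones: reconciling the two descriptions of $K$ so that the central hypothesis aligns exactly with condition (1) of Theorem \ref{bstrict}, and observing that the word ``isometrically'' in Theorem \ref{0b} upgrades to ``isometrically order'' precisely because the isomorphism respects the base and hence the cone. One could also remark, by way of contrast with Corollary \ref{tracialt}, that this is the base-normed (predual) counterpart of the order-unit statement, so no independent argument is required.
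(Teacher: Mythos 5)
Your proposal is correct and follows exactly the route the paper intends: the paper offers no written proof beyond the remark that Theorem \ref{bstrict} yields the corollary, and your assembly of the implication $(1)\Rightarrow(2)$ of Theorem \ref{bstrict} with Theorem \ref{0b} (plus the observation that a base-preserving isometry between base normed spaces is automatically an order isomorphism) is precisely the intended argument, with the bookkeeping on the two descriptions of $K$ correctly reconciled via Theorem \ref{centre}(iii).
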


\thanks{{\bf Acknowledgements:} 
	The author was partially supported by Science and Engineering Research Board, Department of Science and Technology, Government of India sponsored  Mathematical Research Impact Centric Support project (reference no. MTR/2020/000017).}


\begin{thebibliography}{100}

\bibitem{A71} E. M. Alfsen, \textit{Compact convex sets and boundary integrals}, Springer-Verlag, 1971.

\bibitem{Dies75} J. Diestel, \textit{Geometry of Banach spaces-Selected topics}, Lecture Notes in Mathematics, Springer-Verlag, 1975.

\bibitem{GK20} A. Ghatak and A. K. Karn, \textit{Quantization of $A_0(K)$ spaces}, Operator and Matrices, 14(2) (2020), 381-399. 

\bibitem{Jam70} G. J. O. Jameson, \textit{Ordered linear spaces}, Lecture Notes in Mathematics, Springer-Verlag, 1970.

\bibitem{K51} R. V. Kadison, \textit{A representation theory for commutative topological algebras}, Mem. Amer. Math. Soc., \textbf{7}(1951). 

\bibitem{K16} A. K. Karn, \textit{Orthogonality in C$^*$-algebras}, Positivity, {\bf 20(03)} (2016), 607-620. 

\bibitem{K18} A. K. Karn, \textit{Algebraic orthogonality and commuting projections in operator algebras}, Acta Sci. Math. (Szeged), \textbf{84}(2018), 323-353.

\bibitem{K21} A. K. Karn, \textit{A generalization of spin factors}, Acta. Sci. Math. (Szeged)), 87 (2021), 551-569. 

\bibitem{P07} A. Pietsch, \textit{History of Banach spaces and linear operators}, Birkh\"{a}user, Boston, 2007.

\bibitem{R49} A. F. Ruston, \textit{A note on convexity in Banach spaces}, Proc. Cambridge Phil. Soc., \textbf{45}(1949), 157-159.

\bibitem{WN73} Y.-C. Wong and K.-F. Ng, \textit{Partially ordered topological spaces}, Oxford University Press, Oxford, 1973.

\end{thebibliography}
\end{document}